\documentclass[a4paper,11pt]{article}
\UseRawInputEncoding

\usepackage{amsmath}
\usepackage{amssymb}
\usepackage{amsthm}
\usepackage{cite}
\usepackage{graphicx}
\usepackage[footnotesize,centerlast]{subfigure}
\usepackage[small,centerlast]{caption2}
\usepackage{mathtools}
\usepackage[colorlinks=true,linkcolor=blue,citecolor=blue]{hyperref}

\usepackage[all,pdf]{xy}
\usepackage[dvipsnames]{xcolor}

\makeindex
\newtheorem{thm}{\textbf{Theorem}}[section]
\newtheorem{coro}[thm]{\textbf{Corollary}}
\newtheorem{lem}[thm]{\textbf{Lemma}}
\newtheorem{prop}[thm]{\textbf{Proposition}}
\newtheorem{defn}[thm]{\textbf{Definition}}

\newtheorem{remark}[thm]{\textbf{Remark}}

\newcommand{\Def}{Definition}
\newcommand{\Thm}{Theorem}
\newcommand{\Prop}{Proposition}

\newcommand{\Coro}{Corollary}


\newcommand{\iof}{if and only if }
\newcommand{\wrt}{with respect to }

\newcommand{\ptl}{\partial}


\newcommand{\Teich}{Teichm\"{u}ller }
\newcommand{\FN}{Fenchel-Nielsen coordinates}
\newcommand{\Sh}{shearing coordinates}
\newcommand{\WP}{Weil-Petersson }
\newcommand{\homeo}{homeomorphism}

\newcommand{\rep}{representation}
\newcommand{\sccv}{simple closed curve}
\newcommand{\mf}{measured foliation}
\newcommand{\cptspp}{compactly supported}
\numberwithin{equation}{section}

\newcommand{\Tgn}{\mathcal{T}_{g,n}}



\title{Counting mapping class group orbits\\ under shearing coordinates}
\author{Sicheng Lu and Weixu Su} 
\date{\today }

\topmargin       -1.3cm
\evensidemargin  0.5cm
\oddsidemargin   0.5cm
\textheight      21.3cm
\textwidth       15.3cm
\hoffset         0 cm

\hyphenpenalty=3000
\tolerance=1000

\begin{document}
\maketitle

\begin{abstract}
  Let $S_{g,n}$ be an oriented surface of genus $g$ with $n$ punctures, where $2g-2+n>0$
  and $n>0$.
  Any ideal triangulation of $S_{g,n}$ induces a global parametrization of the \Teich space $\mathcal{T}_{g,n}$
called  the shearing coordinates.
  We study the asymptotics of the number of the mapping class group orbits \wrt the standard Euclidean norm of the \Sh.
  The result is based on the works of Mirzakhani. 
\end{abstract}
\tableofcontents

\section{Introduction}
\indent

There are plenty of results about counting closed geodesics on hyperbolic surfaces.
One of the most significant results is Mirzakhani's count of simple closed geodesics.
Let $X$ be a complete hyperbolic metric on $S_{g,n}$ and let $\gamma$ be a simple closed curve.
Denote the hyperbolic length of the geodesic \rep\ of $\gamma$ on $X$ by $\ell_\gamma(X)$.
Let
\[ s_X(L,\gamma):=  \#
\{\ \beta\in\mathrm{Mod}_{g,n}\cdot\gamma\ |\ \ell_\beta(X)\leqslant L\ \} \]
 be the number of simple closed curves in the mapping class group orbit of $\gamma$ with hyperbolic length at most $L$.
Mirzakhani \cite{Mir08}  proved that, as $L\rightarrow+\infty$,
\[ s_X(L,\gamma) \sim \frac{n_\gamma\cdot B(X)}{b_{g,n}} L^{6g-6+2n}\ . \]
In the above formula, the coefficient $n_\gamma$ is determined by the topological type of the curve,
and $B(X)$ is an integrable function on the moduli space, endowed with the Weil-Petersson volume form.
The integration of $B(X)$ defines the constant $b_{g,n}$.
The above result is extended to arbitrary closed curves or multi-curves by Mirzakhani \cite{Mir16}, see also \cite{Ara21}.
There is also a different proof from other viewpoint, see \cite{ES19}.

There are two well-known parametrizations of the Teichm\"uller space related to the Weil-Petersson symplectic form.
One is the Fenchel-Nielsen coordinates, which is defined by choosing a
pants decomposition. The other is the shearing coordinates associated to an
ideal triangulation (or, in general, a maximal geodesic lamination) \cite{Thu86}.
As shown by Mirzakhani \cite{Mir16}, by counting the mapping class group orbit of a fixed hyperbolic surface in Teichm\"uller space,
one can understand the distribution of lengths and twists of curves in a random pants decomposition. A similar question is how a random ideal triangulation of a hyperbolic surface looks like.

In this paper, we  count the number of the mapping class group orbits in the shearing coordinates.
The main result is the following:


\begin{thm}\label{thm:main}
Let $\Delta$ be a given ideal triangulation of  $S_{g,n}$, where $2g-2+n>0$ and $n>0$.
Let $Sh_\Delta: \mathcal{T}_{g,n}\rightarrow\mathbb{R}^{6g-6+3n}$
be the associated shearing coordinates of the Teichm\"uller space.
Then for any $X\in\mathcal{T}_{g,n}$, as $L\rightarrow+\infty$, we have:
\begin{equation}\label{eq:counting}
\#\left\{\  \phi\in \mathrm{Mod}_{g,n}\
\big| \  \|Sh_\Delta(\phi \cdot X)\|\leqslant L\ \right\}
\ \sim\ \frac{n_\Delta \cdot B(X)}{b_{g,n}}
{L^{6g-6+2n}}\ .
\end{equation}
Moreover, the coefficient $n_\Delta$ is determined by the topological type of $\Delta$ and
can be expressed as
\[
n_\Delta= \mu_{wp}\{\ Y\in\Tgn\ |\ \|Sh_\Delta(Y)\|\leqslant 1\ \}.
\]
\end{thm}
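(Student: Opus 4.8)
\emph{Proof plan.}
The plan is to recognize the left-hand side of \eqref{eq:counting} as an instance of Mirzakhani's counting of mapping class group orbits, the missing ingredient being a description of the Euclidean norm of the shearing coordinates as a degree-one quantity on the Thurston boundary of $\Tgn$. Set $F:=\|Sh_\Delta(\cdot)\|\colon\Tgn\to\mathbb{R}_{>0}$. A first, purely formal, observation is the naturality of shearing coordinates — the shear of an edge is a hyperbolic invariant of the marked pair — so that $Sh_\Delta(\phi\cdot X)=Sh_{\phi^{-1}\cdot\Delta}(X)$ and hence
\[
\#\left\{\phi\in\mathrm{Mod}_{g,n}\ \big|\ F(\phi\cdot X)\leqslant L\right\}
=\bigl|\mathrm{Stab}(\Delta)\bigr|\cdot
\#\left\{\Delta'\in\mathrm{Mod}_{g,n}\cdot\Delta\ \big|\ \|Sh_{\Delta'}(X)\|\leqslant L\right\},
\]
with $\mathrm{Stab}(\Delta)\subset\mathrm{Mod}_{g,n}$ the finite symmetry group of $\Delta$; equivalently, we are counting those orbit points $\phi\cdot X$ that lie in the region $Sh_\Delta^{-1}(\overline{B}_L)\subset\Tgn$, where $\overline{B}_L$ is the Euclidean ball of radius $L$. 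By Thurston's parametrization \cite{Thu86}, $Sh_\Delta$ is a real-analytic diffeomorphism of $\Tgn$ onto a linear subspace $W\subset\mathbb{R}^{6g-6+3n}$ (cut out by one relation per puncture); in particular $F$ is proper, the count is finite, and when $\phi$ leaves every compact set so does $\phi\cdot X$, which then converges, after rescaling, to a measured lamination.

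The heart of the proof — and the step I expect to be the main obstacle — is to show that $F$ admits a \emph{homogenization at infinity}: a continuous function $\widehat F\colon\mathcal{ML}_{g,n}\to\mathbb{R}_{\geqslant 0}$, homogeneous of degree one and strictly positive off the zero lamination, such that $\tfrac{1}{L_k}\,F(X_k)\to\widehat F(\lambda)$ whenever a diverging sequence $X_k\in\Tgn$ satisfies $\tfrac{1}{L_k}\,\mathcal{L}_{X_k}\to\lambda$ in the sense of geodesic currents. Geometrically this asserts that the shear cocycle of a diverging family of hyperbolic structures, rescaled by the length scale of the degeneration, converges to the transverse cocycle of the limiting measured lamination, so that $\widehat F$ is the Euclidean norm of that cocycle — a purely topological quantity, the norm of the vector of transverse measures (equivalently, intersection numbers) of $\lambda$ against the edges of $\Delta$, with no dependence on $X$. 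The delicate points are that the rescaled limit be the same however one approaches the Thurston boundary (twisting along a curve versus pinching it), and that the discrepancy $F-\widehat F$ be controlled uniformly enough over a fundamental domain for $\mathrm{Mod}_{g,n}$ that it cannot affect the leading term. I would attack this using the train-track and transverse-cocycle description of $\mathcal{ML}_{g,n}$ and of the Thurston compactification, together with shear and length estimates in the thin part of Teichm\"uller space; the earthquake formula for shear cocycles disposes of the twisting model case.

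Granting $\widehat F$, the counting reduces to Mirzakhani's \cite{Mir08,Mir16} (see also \cite{ES19,Ara21}). The sublevel set $\{\widehat F\leqslant 1\}\subset\mathcal{ML}_{g,n}$ is compact with $\mu_{Th}$-negligible boundary, and, up to lower-order error, $F(\phi\cdot X)\leqslant L$ amounts to $\phi\cdot X$ landing, after rescaling, below the level $\{\widehat F\leqslant 1\}$; the equidistribution of the rescaled mapping class group orbit of $X$ — the content imported from Mirzakhani's work, whose analytic engine is the non-divergence and ergodicity of the Thurston measure — then yields
\[
\#\left\{\phi\in\mathrm{Mod}_{g,n}\ \big|\ F(\phi\cdot X)\leqslant L\right\}
\ \sim\ \frac{B(X)}{b_{g,n}}\;\mu_{Th}\{\,\lambda\in\mathcal{ML}_{g,n}:\widehat F(\lambda)\leqslant 1\,\}\;L^{6g-6+2n},
\]
with $B(X)=\mu_{Th}\{\ell_\lambda(X)\leqslant 1\}$ and $b_{g,n}=\int_{\mathcal M_{g,n}}B\,d\mu_{wp}$ exactly as in the simple-closed-curve count. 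It remains to identify the constant. By Bonahon--S\"ozen (and Papadopoulos--Penner), the Weil--Petersson symplectic form has constant coefficients in the shearing coordinates, so $\mu_{wp}$ is a fixed multiple of Lebesgue measure on $W$, while the Thurston measure is Lebesgue measure in the matching cocycle coordinates on $\mathcal{ML}_{g,n}$; with the standard normalizations these two readings of the unit Euclidean ball agree, so
\[
\mu_{Th}\{\widehat F\leqslant 1\}=\mu_{wp}\{\,Y\in\Tgn:\|Sh_\Delta(Y)\|\leqslant 1\,\}=n_\Delta,
\]
which depends only on the topological type of $\Delta$. This proves \eqref{eq:counting}.
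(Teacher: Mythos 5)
Your overall strategy (homogenize the shear norm at the Thurston boundary, then invoke the equidistribution of rescaled $\mathrm{Mod}_{g,n}$-orbits with respect to the Thurston measure, and finally identify the constant via Papadopoulos--Penner) is a legitimate alternative to the paper's route, which instead verifies the hypotheses of Mirzakhani's ready-made counting theorem --- that $\|Sh_\Delta\|$ is asymptotically piecewise linear and bounding with respect to Fenchel--Nielsen coordinates --- and your last step (Weil--Petersson $=$ Euclidean volume in shearing coordinates, via the linearity of $Sh_\Delta\circ\mathcal{F}_\Delta^{-1}$) coincides with \S\ref{sec:volume}. But as written there is a genuine gap: the step you yourself flag as ``the heart of the proof'' is exactly the content of the theorem, and you do not supply it. Two things are needed there and neither is routine: (i) the existence of the degree-one homogenization $\widehat F$ with convergence $F(X_k)/L_k\to\widehat F(\lambda)$, which is delicate precisely because shears of edges crossing a pinching curve pick up contributions of size comparable to $\log(1/\ell_\alpha)$, i.e.\ of the same order as the scale $L_k$, even though the pinched curve itself has vanishing intersection with the limit; and (ii) \emph{uniform} control of the discrepancy $F-\widehat F$ strong enough to prevent escape of mass, i.e.\ that the rescaled sublevel sets $\frac1L\{F\le L\}$ stay in a fixed compact part of $\mathcal{ML}$ along the orbit. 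Point (ii) is exactly what the bounding condition (Definition \ref{def:bounding}, Proposition \ref{prop:fillike}) encodes, and the paper proves it by the trace computation of Theorem \ref{prop:lth-in-sh} and Corollary \ref{prop:tw-in-sh} ($\cosh(\ell_i/2)$ and $\cosh^2\tau_i$ are polynomial/rational in $e^{\pm s_j/2}$, hence $\ell_i+|\tau_i|$ is linearly bounded by $\|Sh_\Delta\|$ on an orbit); point (i) is the counterpart of Corollary \ref{prop:sh-in-len} and Theorem \ref{thm:shear-apl}. Listing train tracks, thin-part estimates and the earthquake formula as tools does not discharge these steps.

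A second, more local, problem: your identification of $\widehat F(\lambda)$ as ``the norm of the vector of transverse measures (equivalently, intersection numbers) of $\lambda$ against the edges of $\Delta$'' is not correct as stated, and if taken literally it breaks your final constant. The correct candidate limit of $s_e(X_k)/L_k$ is the Papadopoulos--Penner linear combination
$\frac12\bigl(i(\lambda,a)+i(\lambda,c)-i(\lambda,b)-i(\lambda,d)\bigr)$
(Proposition \ref{eq:ms-to-sh}), not $i(\lambda,e)$ itself; only with this choice does $\mu_{Th}\{\widehat F\le1\}$ pull back under the measure-preserving map $\mathcal{F}_\Delta$ to $\mu_{wp}\{\,Y:\|Sh_\Delta(Y)\|\le1\,\}=n_\Delta$. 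With the raw intersection-number norm you would compute the Thurston volume of a different body and obtain the wrong coefficient. Minor further quibbles: the opening identity with $|\mathrm{Stab}(\Delta)|$ is unnecessary, and ``$\phi\cdot X$ converges, after rescaling, to a measured lamination'' holds only along subsequences. In short, the skeleton is sound and genuinely different from the paper's, but the analytic core --- the comparison between shearing coordinates and the asymptotic linear (Fenchel--Nielsen/Dehn--Thurston) structure, which the paper carries out in \S\ref{sec:shearing} and \S\ref{sec:bounding} --- is missing, so the proposal does not yet prove Theorem \ref{thm:main}.
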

Here $\|\cdot\|$ is the standard Euclidean norm of $\mathbb{R}^{6g-6+3n}$, and $\mu_{wp}$ denotes the Weil-Petersson volume form on the
Teichm\"uller space.
Note that for any mapping class $\phi\in \mathrm{Mod}_{g,n}$,
$Sh_\Delta(\phi\cdot X)=Sh_{\phi^{-1}\cdot \Delta}( X)$. Thus \eqref{eq:counting}
counts  the number of mapping class group orbit of a given ideal triangulations.

\bigskip

To prove Theorem \ref{thm:main}, we use the following result of Mirzakhani \cite[Theorem 1.1]{Mir16},
which is later generalized by Arana-Herrera \cite[Theorem 5.5]{Ara20}.

\begin{thm}{\rm \cite{Mir16,Ara20}} \label{thm:count}
Let $\mathcal{F}:\mathcal{T}_{g,n}\rightarrow\mathbb{R}_+$ be a positive, continuous, proper function that is asymptotically piecewise linear and bounding with respect to the Fenchel-Nielsen coordinates. Then
\begin{equation}
\lim_{L\to + \infty}
\frac{ \#\left\{\ \phi\in \mathrm{Mod}_{g,n}\ \big|\ \mathcal{F}(\phi\cdot X)\leqslant L\ \right\} }
{L^{6g-6+2n}}
=
\frac{B(X)\cdot r(\mathcal{F})}{b_{g,n}},
\end{equation}
where
\begin{equation}\label{eq:coeff}
r(\mathcal{F}):=
\lim_{L\to + \infty}
\frac{ \mu_{wp} \left\{ Y\in\mathcal{T}_{g,n}\ \big|\ \mathcal{F}(Y)\leqslant L \right\} }
{L^{6g-6+2n}}\ .
\end{equation}
\end{thm}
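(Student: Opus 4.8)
The plan is to follow Mirzakhani's method: first establish the asymptotics after averaging over moduli space, which fixes the power $L^{6g-6+2n}$ and the constant $r(\mathcal{F})$, and then localize this mean to the pointwise count carrying the weight $B(X)$, using the ergodicity of the $\mathrm{Mod}_{g,n}$-action on measured laminations together with the scaling of the Thurston measure. Write $d=6g-6+2n$, let $\mathcal{ML}_{g,n}$ be the space of measured laminations and $\mu_{\mathrm{Th}}$ the Thurston measure, which is $\mathrm{Mod}_{g,n}$-invariant and homogeneous of degree $d$ under scaling of transverse measures, $\mu_{\mathrm{Th}}(t\cdot A)=t^{d}\mu_{\mathrm{Th}}(A)$. \textbf{Step 1.} Using that $\mathcal{F}$ is asymptotically piecewise linear and bounding with respect to the \FN, I would extract a homogeneous, degree-one, piecewise-linear, proper function $\widehat{\mathcal{F}}$ with $\mathcal{F}/\widehat{\mathcal{F}}\to 1$ along every coordinate ray to infinity. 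Then $\{\mathcal{F}\leqslant L\}$ agrees with $L\cdot\{\widehat{\mathcal{F}}\leqslant 1\}$ up to a set of lower-order volume, the region $R:=\{\widehat{\mathcal{F}}\leqslant 1\}$ is \emph{bounded} (this is where ``bounding'' enters, ruling out escape of mass), and $\partial R$ is a finite union of hyperplane pieces, hence $\mu_{\mathrm{Th}}$-null.

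\noindent\textbf{Step 2 (averaged asymptotics).} The map $Z\mapsto\sum_{\phi\in\mathrm{Mod}_{g,n}}\mathbf{1}[\mathcal{F}(\phi\cdot Z)\leqslant L]$ is $\mathrm{Mod}_{g,n}$-invariant and descends to $\mathcal{M}_{g,n}$. Integrating over moduli space and unfolding the orbit sum gives
\[
\int_{\mathcal{M}_{g,n}}\#\{\phi\in\mathrm{Mod}_{g,n}\mid\mathcal{F}(\phi\cdot Z)\leqslant L\}\,d\mu_{wp}(Z)
=\mu_{wp}\{W\in\Tgn\mid\mathcal{F}(W)\leqslant L\},
\]
which by Step 1 and \eqref{eq:coeff} is asymptotic to $r(\mathcal{F})\,L^{d}$. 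This pins down the exponent and the constant $r(\mathcal{F})$, but only in the mean over $\mathcal{M}_{g,n}$.

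\noindent\textbf{Step 3 (localization) and Step 4 (conclusion).} To localize, I would study the rescaled empirical measures
\[
\nu_{X,L}:=\frac{1}{L^{d}}\sum_{\phi\in\mathrm{Mod}_{g,n}}\delta_{(1/L)\,\phi\cdot X},
\]
the rescaled orbit being identified with points of $\mathcal{ML}_{g,n}$ through the common asymptotic structure of the \FN on $\Tgn$ and Dehn--Thurston coordinates on $\mathcal{ML}_{g,n}$. Ergodicity of $\mathrm{Mod}_{g,n}$ on $(\mathcal{ML}_{g,n},\mu_{\mathrm{Th}})$ forces any weak limit of $\nu_{X,L}$ to be a multiple $c(X)\,\mu_{\mathrm{Th}}$ of Thurston measure; integrating the resulting pointwise relation against $\mu_{wp}$ and comparing with Step 2 forces $\int_{\mathcal{M}_{g,n}}c(X)\,d\mu_{wp}=1$, while Mirzakhani's integration formula and the \WP volume asymptotics identify the density exactly as $c(X)=B(X)/b_{g,n}$, where $B(X)=\mu_{\mathrm{Th}}\{\lambda\mid\ell_\lambda(X)\leqslant 1\}$ and $b_{g,n}=\int_{\mathcal{M}_{g,n}}B(X)\,d\mu_{wp}$. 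Applying this weak convergence to the indicator of the bounded continuity set $R$ of Step 1 (a continuity set since $\partial R$ is $\mu_{\mathrm{Th}}$-null) and the Portmanteau theorem yields
\[
\frac{1}{L^{d}}\#\{\phi\in\mathrm{Mod}_{g,n}\mid\mathcal{F}(\phi\cdot X)\leqslant L\}
\;\longrightarrow\;
\frac{B(X)}{b_{g,n}}\,\mu_{\mathrm{Th}}(R)
=\frac{B(X)\,r(\mathcal{F})}{b_{g,n}},
\]
the last equality coming from $\mu_{\mathrm{Th}}(R)=r(\mathcal{F})$ via Step 1 and \eqref{eq:coeff}. The replacement of $\mathcal{F}$ by $\widehat{\mathcal{F}}$ is controlled by sandwiching $\{\mathcal{F}\leqslant L\}$ between $\{\widehat{\mathcal{F}}\leqslant(1\pm\varepsilon)L\}$ and letting $\varepsilon\to 0$, the finitely many bounded orbit points contributing $O(1)$ and washing out after division by $L^{d}$.

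\noindent The main obstacle is Step 3: identifying the weak limit of $\nu_{X,L}$ as the specific multiple $\frac{B(X)}{b_{g,n}}\mu_{\mathrm{Th}}$. That the limit is \emph{some} multiple of $\mu_{\mathrm{Th}}$ is ergodic-theoretic, but pinning the constant $B(X)/b_{g,n}$, and above all proving that no mass escapes to infinity so that the convergence may be tested against the region $\{\mathcal{F}\leqslant L\}$ (which is unbounded in $\Tgn$), requires the full strength of Mirzakhani's volume machinery. This is precisely the content quoted from \cite{Mir16,Ara20}, so in the write-up I would cite this equidistribution as the external input and keep the original work in Steps 1, 2 and 4.
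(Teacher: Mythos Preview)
The paper does not prove this theorem at all: Theorem~\ref{thm:count} is stated as a quoted result from \cite{Mir16} (Mirzakhani) and \cite{Ara20} (Arana-Herrera), and is used as a black box to deduce Theorem~\ref{thm:main}. The paper's own contribution is to verify that the shear norm $\|Sh_\Delta\|$ satisfies the hypotheses of Theorem~\ref{thm:count} (APL in \S\ref{sec:shearing}, bounding in \S\ref{sec:bounding}) and to identify $r(\mathcal{F})$ in \S\ref{sec:volume}. So there is no ``paper's own proof'' of this statement to compare against.

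Your sketch is a reasonable outline of the Mirzakhani argument, and you correctly identify the crux (Step~3, equidistribution of rescaled orbits to $\frac{B(X)}{b_{g,n}}\mu_{\mathrm{Th}}$ with no escape of mass) as the deep input from \cite{Mir16,Ara20}. But since you end by saying you would cite exactly that equidistribution, your proposal collapses to the same citation the paper makes. If the intent was to supply an independent proof, Step~3 is not a minor obstacle but essentially the entire theorem; the ``bounding'' hypothesis is there precisely to guarantee the no-escape-of-mass needed for Portmanteau, and establishing that uniformly over the orbit is the substance of the cited works. If the intent was merely to explain why the theorem is plausible before citing it, your outline is fine, though it goes well beyond what the paper itself does.
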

See \S \ref{sec:apl} and \S \ref{sec:bound} for the definitions of \emph{asymptotically piecewise linear} and \emph{bounding}  functions, respectively. The most important examples are hyperbolic lengths of closed curves.
To apply Theorem \ref{thm:count}, we show that the \Sh\ satisfy the following properties:

\noindent

\emph{(C1) The shear on each edge of $\Delta$ is asymptotically piecewise linear \wrt the \FN.}
We observe that each shear can be described by an asymptotically piecewise linear function of the hyperbolic lengths of some closed curves.
The proof is presented in \S \ref{sec:shearing}.

\medskip

\emph{(C2). The Euclidean norm of the
\Sh\ is bounding \wrt the \FN.} The proof is presented in \S \ref{sec:bounding}.
We first give an equivalent definition of the bounding condition. Then we express the length functions in terms of the \Sh, again in an asymptotically linear way.

\bigskip

To compute the coefficient $r(\mathcal{F})$ in Theorem \ref{thm:count} when $\mathcal{F}$ is the shearing norm, we show that the Weil-Petersson volume form is equal to
Euclidean volume form under the \Sh,
up to a scaling constant.
This is done in \S \ref{sec:volume}.

\bigskip

\noindent
{\bf Acknowledgements. } We would like to thank for useful discussions, and for numerous useful comments and corrections.

\section{Preliminaries}

\subsection{\Teich space and  \FN}
\indent

We recall some basic notions from the theory of \Teich spaces.
For more details, see \cite{FM,Hu}.

Given a topological surface $S_{g,n}$, its \emph{\Teich space} $\mathcal{T}_{g,n}$ is the space of all complete hyperbolic metrics up to isotopy.
More precisely, a point in $\mathcal{T}_{g,n}$ is an equivalence class of pairs $(f,\Sigma)$, where $f$ is an orientation-preserving \homeo\ from $S_{g,n}$ to a complete hyperbolic surface $\Sigma$.
Two pairs $(f_1,\Sigma_1)$ and $(f_2,\Sigma_2)$ are equivalent \iof $f_2\circ f_1^{-1}$ is homotopic to an isometry from $\Sigma_1$ to $\Sigma_2$.

Let $\gamma$ be a closed curve on $S_{g,n}$, which is neither homotopic to a point nor to a puncture.
Given $X\in\Tgn$ represented by a pair $(f,\Sigma)$, the curve $f(\gamma)$ is  freely homotopic to a unique closed geodesic on $\Sigma$.
The hyperbolic length of $f(\gamma)$ on $\Sigma$
depends on the equivalence class of the pair. Thus it defines a function on $\Tgn$ called the \emph{length function} of $\gamma$, denoted by $\ell_\gamma$.

The hyperbolic length can also be expressed by the trace of matrix.
If $X\in\Tgn$ is corresponding to a Fuchsian \rep\
$\rho_X:\pi_1(S_{g,n})\to \mathrm{PSL}(2,\mathbb{R})$,
then for any closed curve $\gamma\in\pi_1(S_{g,n})$, we have
\begin{equation}\label{eq:length-tr}
\big| \mathrm{tr}\big( \rho_X(\gamma) \big)\big|
= \cosh\left( \frac{\ell_\gamma(X)}2 \right)
\ .
\end{equation}

\medskip
Let $\mathrm{Mod}_{g,n}$ be the \emph{mapping class group} of $S_{g,n}$, i.e. the group of isotopy classes of orientation-preserving self-\homeo s leaving each puncture fixed. Then
$\mathrm{Mod}_{g,n}$ acts on $\Tgn$ by changing the markings.
If $X\in\Tgn$ is represented by $(f,\Sigma)$ and $\phi\in\mathrm{Mod}_{g,n}$,
then $\phi \cdot X \in \Tgn$ is represented by $(f\circ \phi^{-1},\Sigma)$.
In particular, for a closed curve $\gamma$, we have $\ell_\gamma(\phi \cdot X)=\ell_{\phi^{-1}\cdot \gamma}(X)$.
The action of $\mathrm{Mod}_{g,n}$ on $\Tgn$ is properly discontinuous, thus the orbit of any point is discrete.

\medskip

Recall that a  pair of pants is a topological surface 
homeomorphic to $S_{0,3}$.
A \emph{pants decomposition} of $S_{g,n}$ is a set of disjoint \sccv s which decompose the surface into pairs of pants.
Let $\mathcal{P}=\{\alpha_i\}_{i=1}^{3g-3+n}$ be  a pants decomposition of $S_{g,n}$.
The \emph{\FN} adapted to $\mathcal{P}$ consist of the length functions of $\alpha_i$ and the twist parameters $\tau_{\alpha_i}$ along $\alpha_i$. 
For the precise definition of twist parameter, see \cite[$\mathbf{\S}$3.3]{Bu}.
We have a homeomorphism
\begin{align*}
\mathrm{FN}_\mathcal{P}:
\Tgn&\longrightarrow\mathbb{R}_+^{3g-3+n}
\times\mathbb{R}^{3g-3+n}\\
X&\longmapsto \big( l_{\alpha_i}(X), \tau_{\alpha_i}(X) \big)\ .
\end{align*}

The Fenchel-Nielsen coordinates induce a canonical symplectic 2-form on $\Tgn$, which is called the \WP symplectic form. A remarkable fact due to Wolpert \cite{Wol82} is that
the \WP symplectic form does not depend on the choice of the pants decomposition. Thus it gives a volume form $\mu_{wp}$
on $\Tgn$ which is invariant under the action of the mapping class group.

\subsection{Asymptotically piecewise linear functions}\label{sec:apl}
\indent

We introduce the notion of asymptotically piecewise linear function, following \cite[\S 4]{Mir16}.

A \emph{closed cone} $\mathcal{C}\subset\mathbb{R}^k$ is a noncompact closed region bounded by finitely many hyperplanes, that is,
\[\mathcal{C}=\bigcap_{i=1}^{m}\big\{
\ \vec{x}\in\mathbb{R}^k \ \big|\
\mathcal{R}_i(\vec{x})\geqslant0\ \big\} ,\]
where each $\mathcal{R}_i(\vec{x})=r_i^1 x_1+\cdots+r_i^k x_k$ is a linear function.

We say that $\vec{x}$ \emph{tends to infinity in $\mathcal{C}$}, denoted by $\vec{x}\to \mathcal{C}_\infty$,
if $\vec{x}$ stays within the closed cone $\mathcal{C}$ and
\[\min_{i=1\cdots m}\big\{\ \mathcal{R}_i(\vec{x})\ \big\}\rightarrow +\infty\ .\]
Geometrically, this means that $\vec{x}$ stays asymptotically away from the boundary of $\mathcal{C}$.

Let $F:\mathcal{C}\to\mathbb{R}$ be a function on a closed cone.
We say that $F$ is \emph{asymptotically linear}
if there exists a linear function $\mathcal{L}:\mathcal{C}\to\mathbb{R}$ and  some real number $c$ such that
\[\lim_{\vec{x}\to \mathcal{C}_\infty}
\big( F(\vec{x}) - \mathcal{L}(\vec{x}) \big) = c\ .\]
For this we write $F\sim\mathcal{L}$ in $\mathcal{C}$.
Note that $F\sim\mathcal{L}$ in $\mathcal{C}$ \iof for any $\varepsilon>0$, there exists $A>0$ such that
\[ \big|F(\vec{x}) - \big(\mathcal{L}(\vec{x})+c\big)\big| <\varepsilon \]
for any $\vec{x}\in\mathcal{C}$ satisfying $\min\{\mathcal{R}_i(\vec{x})\}>A$.

Roughly speaking, being asymptotically linear means that, far away from the hyperplanes, the function behaves asymptotically like a linear function.
Simple examples of asymptotically linear functions are
$\cosh^{-1}(e^x)$ and $\sinh^{-1}(e^x)$.
We have
\[\cosh^{-1}(e^x) \sim x,\ \sinh^{-1}(e^x) \sim x\ \mathrm{in}\ \mathbb{R}_+\ .\]

A function is defined to be \emph{asymptotically piecewise linear}, if one can divide its domain of definition into finitely many closed cones such that the function is asymptotically linear on each cone.
Similarly, a vector-valued function $\mathbf{F}$ is asymptotically (piecewise) linear if each component is asymptotically (piecewise) linear.
Equivalently, there exists a linear transformation $\mathcal{L}:\mathbb{R}^k\to\mathbb{R}^l$ and $\vec{c}\in\mathbb{R}^l$ such that
$$\| \mathbf{F}(\vec{x})- \big( \mathcal{L}(\vec{x})+\vec{c}\ \big) \|
\to0$$ as $\vec{x}\to \mathcal{C}_\infty$.
We will use the abbreviation ``{A(P)L}" for
"asymptotically (piecewise) linear" throughout this paper.

The following composition law is easy to prove.
\begin{prop}[{\bf Composition law for APL functions}]\label{prop:apl-comp}
Let $\mathcal{C}$ be a closed cone in $\mathbb{R}^k$ and let $F_i:\mathcal{C}\to\mathbb{R}$ be APL functions for $i=1,\cdots,m$.
Let $\mathcal{C}'$ be a closed cone in $\mathbb{R}^m$ and let $G:\mathcal{C'}\to \mathbb{R}$ be an APL function.
Assume that for all $\vec{x}\in\mathcal{C}$,
$\big( F_1(\vec{x}) ,\cdots, F_m(\vec{x} ) \big) \in \mathcal{C'}$.
Then $H:= G\big( F_1 ,\cdots, F_m \big)$
is again an ALP function.
\end{prop}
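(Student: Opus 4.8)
The plan is to reduce to the case where every $F_i$ is asymptotically \emph{linear} (not merely piecewise linear) on $\mathcal{C}$, and then to pull back the piecewise structure of $G$ through the linear approximation of the vector-valued map $\mathbf{F}=(F_1,\dots,F_m)$. First I would pass to a common refinement: since each $F_i$ is APL, $\mathcal{C}$ decomposes into finitely many closed subcones on which $F_i$ is asymptotically linear, and since the intersection of two closed cones is again a closed cone, intersecting these $m$ decompositions yields finitely many closed cones $\mathcal{E}_1,\dots,\mathcal{E}_p$ covering $\mathcal{C}$ on each of which $F_1,\dots,F_m$ are \emph{simultaneously} asymptotically linear. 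Fixing an $\mathcal{E}_l$, the vector-valued reformulation recalled above provides a linear transformation $\mathcal{L}\colon\mathbb{R}^k\to\mathbb{R}^m$ and a vector $\vec{c}\in\mathbb{R}^m$ with $\|\mathbf{F}(\vec{x})-(\mathcal{L}(\vec{x})+\vec{c})\|\to 0$ as $\vec{x}\to(\mathcal{E}_l)_\infty$.

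Next I would transport the decomposition of $\mathcal{C}'$ on which $G$ is asymptotically linear. Write $\mathcal{C}'=\bigcup_j \mathcal{C}'_j$ with $\mathcal{C}'_j=\bigcap_i\{\vec{y}:\mathcal{R}'_{j,i}(\vec{y})\geqslant 0\}$ and $G\sim\mathcal{M}_j$ with constant $d_j$ on $\mathcal{C}'_j$. A preliminary observation, needed so that the pulled-back cones actually cover $\mathcal{E}_l$, is that $\mathcal{L}(\mathcal{E}_l)\subseteq\mathcal{C}'$: for an interior point $\vec{x}$ of $\mathcal{E}_l$ the ray $t\vec{x}$ stays in $\mathcal{E}_l$ and tends to $(\mathcal{E}_l)_\infty$, so $\tfrac1t\mathbf{F}(t\vec{x})\in\mathcal{C}'$ while $\tfrac1t\mathbf{F}(t\vec{x})\to\mathcal{L}(\vec{x})$ as $t\to+\infty$, whence $\mathcal{L}(\vec{x})\in\overline{\mathcal{C}'}=\mathcal{C}'$; interior points being dense and $\mathcal{C}'$ closed, this holds on all of $\mathcal{E}_l$. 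Consequently the sets $\mathcal{D}_{l,j}:=\mathcal{E}_l\cap\bigcap_i\{\vec{x}:\mathcal{R}'_{j,i}(\mathcal{L}(\vec{x}))\geqslant 0\}$ are genuine closed cones, since each $\mathcal{R}'_{j,i}\circ\mathcal{L}$ is linear, and they cover $\mathcal{E}_l$. Ranging over all $l$ and $j$, the $\mathcal{D}_{l,j}$ give a finite decomposition of $\mathcal{C}$ into closed cones, and it remains to verify that $H$ is asymptotically linear on each of them.

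Now fix $\mathcal{D}_{l,j}$ and let $\vec{x}\to(\mathcal{D}_{l,j})_\infty$. Since the defining functionals of $\mathcal{E}_l$ are among those of $\mathcal{D}_{l,j}$, we also have $\vec{x}\to(\mathcal{E}_l)_\infty$, hence $\mathbf{F}(\vec{x})-\mathcal{L}(\vec{x})-\vec{c}\to 0$. Because $\mathcal{R}'_{j,i}(\mathcal{L}(\vec{x}))\to+\infty$ by definition of $\mathcal{D}_{l,j}$, and any linear functional applied to a vanishing vector is $o(1)$, we get $\mathcal{R}'_{j,i}(\mathbf{F}(\vec{x}))=\mathcal{R}'_{j,i}(\mathcal{L}(\vec{x}))+\mathcal{R}'_{j,i}(\vec{c})+o(1)\to+\infty$ for every $i$; thus $\mathbf{F}(\vec{x})$ eventually lies in $\mathcal{C}'_j$ and $\mathbf{F}(\vec{x})\to(\mathcal{C}'_j)_\infty$. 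Applying $G\sim\mathcal{M}_j$ on $\mathcal{C}'_j$ and then linearity of $\mathcal{M}_j$ together with $\mathbf{F}(\vec{x})-\mathcal{L}(\vec{x})-\vec{c}\to 0$ once more,
\[
H(\vec{x})=G(\mathbf{F}(\vec{x}))=\mathcal{M}_j(\mathbf{F}(\vec{x}))+d_j+o(1)=\mathcal{M}_j(\mathcal{L}(\vec{x}))+\mathcal{M}_j(\vec{c})+d_j+o(1).
\]
Hence $H\sim\mathcal{M}_j\circ\mathcal{L}$ on $\mathcal{D}_{l,j}$ with constant $\mathcal{M}_j(\vec{c})+d_j$, so $H$ is APL.

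I expect the only genuine subtlety — the main obstacle — to be the bookkeeping of the two-level cone structure, and concretely the step that ``$\vec{x}$ deep inside $\mathcal{D}_{l,j}$'' forces ``$\mathbf{F}(\vec{x})$ deep inside the matching piece $\mathcal{C}'_j$''. The point is that the constant shift $\vec{c}$ and the vanishing error cannot push $\mathbf{F}(\vec{x})$ out of the open cone it is heading into, which is exactly the estimate $\mathcal{R}'_{j,i}(\mathbf{F}(\vec{x}))=\mathcal{R}'_{j,i}(\mathcal{L}(\vec{x}))+O(1)$ above; this must be combined with the easy but necessary inclusion $\mathcal{L}(\mathcal{E}_l)\subseteq\mathcal{C}'$, without which the cones $\mathcal{D}_{l,j}$ would not be known to cover $\mathcal{C}$. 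Everything else is routine.
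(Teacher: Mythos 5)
Your argument is correct, and in fact the paper offers no proof at all of Proposition \ref{prop:apl-comp} (it is dismissed as ``easy to prove''), so there is nothing to compare against except the intended folklore argument, which is exactly what you have written out: refine the domain so that $F_1,\dots,F_m$ are simultaneously asymptotically linear, use the vector-valued formulation $\mathbf{F}\sim\mathcal{L}+\vec{c}$ on each refined cone $\mathcal{E}_l$, pull back the pieces of $G$ through $\mathcal{L}$, and check that ``deep in $\mathcal{D}_{l,j}$'' forces $\mathbf{F}(\vec{x})$ deep into $\mathcal{C}'_j$ because the error $\vec{c}+o(1)$ only perturbs each $\mathcal{R}'_{j,i}\circ\mathcal{L}$ by $O(1)$. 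All the monotonicity you use implicitly (a subcone cut out by additional functionals has a more restrictive notion of tending to infinity, so asymptotic linearity restricts to subcones) goes the right way. The one place where your write-up is not airtight is the inclusion $\mathcal{L}(\mathcal{E}_l)\subseteq\mathcal{C}'$: the ray-plus-density argument needs $\mathcal{E}_l$ to have nonempty interior (and no identically vanishing defining functional), and a refined cone $\mathcal{E}_l$ can perfectly well be lower-dimensional. But in that case no point tends to infinity in $\mathcal{E}_l$, so asymptotic linearity of $H$ is vacuous on $\mathcal{E}_l$ and on any subcone carrying its defining functionals; one can then simply keep such an $\mathcal{E}_l$ as a single piece of the decomposition (or add the complementary sign cones) to restore the covering of $\mathcal{C}$. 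With that one-sentence patch the proof is complete.
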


Given a family $\mathcal{M}$ of APL funcitons,
let $\mathfrak{F}$ be the set of functions generated by  $\mathcal{M}$, under arithmetic operations, rational multiplication or $N$-th root:
\[f\pm g\ ,\ f\cdot g\ ,\ \frac f g\ ,\ r\cdot f\ ,\ \sqrt[N]{f}\ . \]
Then for each $f\in\mathfrak{F}$, the functions
$\sinh^{-1}(e^f),\cosh^{-1}(e^f)$ are APL.
In this paper, all APL functions we considered are obtained from such construction.

The following basic result is due to Mirzakhani \cite[Theorem 4.1]{Mir16}:

\begin{prop}\label{prop:length-apl}
For any closed curve $\gamma$ on $S_{g,n}$, the hyperbolic length function $\ell_\gamma:\mathcal{T}_{g,n}\to\mathbb{R}_+$ is an APL function \wrt any given \FN.
\end{prop}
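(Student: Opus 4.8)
The plan is to reduce everything to the trace identity \eqref{eq:length-tr} together with the fact recorded just above, that $\cosh^{-1}(e^{f})$ is APL whenever $f$ is. First I would dispose of the degenerate cases: if $\gamma$ is homotopic to a puncture then $\ell_\gamma\equiv 0$, and if $\gamma$ is isotopic to one of the curves $\alpha_i$ of the chosen pants decomposition $\mathcal{P}=\{\alpha_i\}$ then $\ell_\gamma=\ell_{\alpha_i}$ is a coordinate, hence linear. So I would assume that $\gamma$ meets $\mathcal{P}$ essentially, with $k=i(\gamma,\mathcal{P})>0$.

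Next I would write the Fuchsian holonomy $\rho_X(\gamma)\in\mathrm{PSL}(2,\mathbb{R})$ as an explicit word. Cutting $S_{g,n}$ along $\mathcal{P}$ breaks $\gamma$ into $k$ arcs, each properly embedded in a pair of pants and joining two of its cuffs; after fixing an auxiliary framing along each $\alpha_i$ one obtains a factorization
\[
\rho_X(\gamma)\ =\ A_1\,T_{i_1}\,A_2\,T_{i_2}\cdots A_k\,T_{i_k},
\]
where each crossing matrix $A_j$ is given by explicit hyperbolic-trigonometric formulas in the cuff lengths $\ell_{\alpha_m}$ of the pair of pants carrying the $j$-th arc, and $T_{i_j}=\mathrm{diag}\big(e^{(\tau_{\alpha_{i_j}}+d_{i_j})/2},\,e^{-(\tau_{\alpha_{i_j}}+d_{i_j})/2}\big)$ in the framing adapted to $\alpha_{i_j}$, with an offset $d_{i_j}$ that depends only on the adjacent cuff lengths and is itself APL. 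Expanding the product, $\mathrm{tr}(\rho_X(\gamma))$ becomes a finite integer combination of monomials, each a product of factors $e^{\pm\tau_{\alpha_i}/2}$, $e^{\pm d_i/2}$ and entries of the $A_j$.

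It then suffices to prove that $\log|\mathrm{tr}(\rho_X(\gamma))|$ is APL with respect to the given \FN, since then $\ell_\gamma=2\cosh^{-1}\!\big(e^{\log|\mathrm{tr}(\rho_X(\gamma))|}\big)$ is APL by the quoted stability of $\cosh^{-1}(e^{(\cdot)})$ and Proposition \ref{prop:apl-comp}. For this I would subdivide $\mathbb{R}_+^{3g-3+n}\times\mathbb{R}^{3g-3+n}$ into finitely many closed cones, each carrying all of $\ell_{\alpha_1}\ge 0,\dots,\ell_{\alpha_{3g-3+n}}\ge 0$ among its defining inequalities, so that tending to infinity in any such cone forces every $\ell_{\alpha_m}\to+\infty$; then $\cosh(\ell_{\alpha_m}/2)$ and $\sinh(\ell_{\alpha_m}/2)$ are each comparable to $\tfrac12 e^{\ell_{\alpha_m}/2}$ and no matrix entry degenerates. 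Refining further by the finitely many hyperplanes that compare the competing exponential terms inside the entries of the $A_j$, the logarithm of every entry becomes asymptotically linear on each subpiece; feeding this into the monomial expansion and refining once more by the hyperplanes that compare the resulting (now piecewise-linear) exponents of the monomials of $\mathrm{tr}(\rho_X(\gamma))$, one reaches cones on which a single monomial $\pm e^{f_\nu}$ dominates all the others by an exponential factor tending to infinity. On such a cone $|\mathrm{tr}(\rho_X(\gamma))|\sim e^{f_\nu}$, so $\log|\mathrm{tr}(\rho_X(\gamma))|\sim f_\nu$, which is linear. (The bound $|\mathrm{tr}(\rho_X(\gamma))|=\cosh(\ell_\gamma/2)\ge 1$ keeps this peeling-off from ever collapsing to a degenerate limit, and in the extreme case the trace stays bounded, giving $\ell_\gamma\to\mathrm{const}$, asymptotically linear with zero linear part.)

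The main obstacle is exactly this cone bookkeeping. The delicate points are that the exponents attached to the monomials of $\mathrm{tr}(\rho_X(\gamma))$ are not linear but only piecewise-linear, since they are assembled from the dominant summands of the individual matrix entries, so the subdivision must be carried out in the right order (entries first, monomials second) and one must check that only finitely many comparison hyperplanes are ever invoked; and that one must exclude the intermediate regime in which two exponents stay a bounded but non-zero distance apart, which is precisely why the relevant comparison functionals have to be built into the defining inequalities of the cones, so that such a bounded gap cannot persist as one tends to infinity within a cone. Granting this, the passage to $\ell_\gamma$ is formal.
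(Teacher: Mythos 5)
First, a point of comparison: the paper does not prove this proposition at all --- it is quoted from Mirzakhani \cite[Theorem 4.1]{Mir16}, so your proposal is a from-scratch reconstruction rather than a variant of an argument in the text. The strategy you choose (factor the holonomy into crossing matrices depending on cuff lengths and diagonal twist matrices, expand the trace into exponential terms, and run a dominant-term analysis over a finite cone subdivision, finishing with $\ell_\gamma=2\cosh^{-1}(|\mathrm{tr}|/2)$ and Proposition \ref{prop:apl-comp}) is the standard one, and it is the same spirit as the paper's own Theorem \ref{prop:lth-in-sh}, where the analogous factorization is carried out in shearing coordinates. Your device of listing all $\ell_{\alpha_i}\geqslant 0$ among the defining inequalities of every cone, so that tending to infinity in a cone forces all cuff lengths to blow up, is legitimate under the paper's definition of APL (behaviour near the walls is unconstrained), and the reduction of the peripheral/pants-curve cases is fine (though note the paper simply excludes curves homotopic to a puncture from the definition of $\ell_\gamma$).

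There is, however, one step that is asserted rather than established, and it is exactly where such proofs are delicate. Your subdivision argument forces a gap $f_\nu-f_{\nu'}\to+\infty$ only when the \emph{linear parts} of the two exponents differ; if two monomials have identical linear parts (e.g.\ the same net twist exponent), their difference stays bounded on every cone, and no choice of comparison functionals in the defining inequalities can push it to infinity. In the genuinely polynomial situation of Theorem \ref{prop:lth-in-sh} one simply combines like terms exactly, but in Fenchel--Nielsen coordinates the entries of the crossing matrices $A_j$ are not Laurent monomials in $e^{\ell_{\alpha_i}/2}$: they involve quotients and square roots (e.g.\ $\sinh d=\sqrt{\cosh^2 d-1}$ with $\cosh d$ a rational expression in the cuff lengths), so your ``monomials'' are only leading-order asymptotic approximations, each of the form $\pm C e^{f_\nu}(1+o(1))$. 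If two such terms have equal linear parts and their limiting coefficients cancel (as happens already for $\cosh d-\sinh d=e^{-d}$), the truncated expansion cannot determine the asymptotics of $\mathrm{tr}(\rho_X(\gamma))$, and the peeling-off as you describe it stalls; the parenthetical appeal to $|\mathrm{tr}(\rho_X(\gamma))|\geqslant 2$ bounds the limit from below but says nothing about which exponent governs the growth. To close the gap you must either prove the needed non-cancellation for the specific coefficients arising from the crossing matrices, or organize the expansion so that all terms are exact exponential expressions that can be collected (or whose coefficients have a single sign), or else iterate the expansion to deeper order and show the process terminates after finitely many cones --- none of which is automatic. Alternatively, one can do what the paper does and simply invoke \cite[Theorem 4.1]{Mir16}.
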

In fact, Mirzakhani \cite{Mir16} shows that the transformation between \FN\ associated to any two different pants decomposition is APL.
Thus the property of being APL \wrt \FN\ does not depend on the the choice of the pants decomposition.

\section{The Shearing Coordinates}\label{sec:shearing}
\indent

The \Sh\ of $\Tgn$ was introduced by Thurston\cite{Thu86}.
We will show that, for any ideal triangulation, the shear on each edge is an APL function \wrt the \FN.

\subsection{Shear between adjacent triangles}
\indent

Let $T_1,T_2$ be a pair of adjacent ideal geodesic triangles in the hyperbolic plane, with a common edge $c$.
Note that all ideal geodesic triangles are isometric.
An ideal triangle has a unique inscribed circle tangent to its three edges.
Denote the tangent points of the inscribed circles on the common edge $c$ by $p_i\ (i=1,2)$.

\begin{defn}\label{def:shear}
The shear on the common edge $c$ of two adjacent ideal triangles $T_1,T_2$ is the signed distance from $p_1$ to $p_2$, denoted by
$s_c$.
Here $c$ is oriented such that $T_1$ is on the left and $T_2$ is on the right.
\end{defn}

\begin{figure} [h]
  \centering
  \includegraphics[scale=1.4]{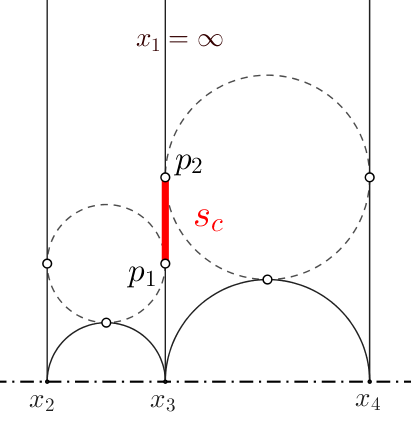}
  \caption{Shear between two adjacent ideal triangles. Here $s_c>0$ . }\label{fig:shear}
\end{figure}

One can check that interchanging the order of $T_1,T_2$ does not change the shear.
See Figure \textbf{\ref{fig:shear}} for the case of a positive shear.

The following formula relates shear with cross-ratio.
We adopt the upper half plane model for the hyperbolic plane. The ideal boundary $\ptl\mathbb{H}$ is identified with $\mathbb{R}\cup \{\infty\}$.
Denote the geodesic with two different end points $x,y\in\ptl\mathbb{H}$  by $[x,y]$.
And denote the ideal triangle with three different ideal vertices $x,y,z\in\ptl\mathbb{H}$  by $[x,y,z]$.

\begin{prop}\label{prop:sh-in-cr}
Let $x_1,x_2,x_3,x_4$ be four distinct points on $\ptl\mathbb{H}$, in counterclockwise order.
Let
$T_1=[x_1,x_2,x_3]$ and
$T_2=[x_1,x_3,x_4]$, with the common edge $c=[x_1,x_3]$. Then
\begin{equation}\label{eq:sh-in-cr}
s_c=\ln
\frac{(x_1-x_2)(x_3-x_4)}{(x_1-x_4)(x_2-x_3)}\ .
\end{equation}
\end{prop}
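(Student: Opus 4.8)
The plan is to reduce the general case to a normalized configuration using the transitive action of $\mathrm{PSL}(2,\mathbb{R})$ on the ideal boundary, since both sides of \eqref{eq:sh-in-cr} are invariant under this action: the left side because $\mathrm{PSL}(2,\mathbb{R})$ acts by isometries and hence preserves signed distances along the common geodesic as well as the inscribed circles and their tangent points, and the right side because the cross-ratio is a projective invariant. So first I would apply a M\"obius transformation sending $x_1\mapsto 0$, $x_3\mapsto\infty$, which places the common edge $c$ along the imaginary axis $\{iy: y>0\}$, with $T_1$ to one side and $T_2$ to the other; counterclockwise order of $x_1,x_2,x_3,x_4$ then forces $x_2$ and $x_4$ to lie on opposite sides of $0$ on the real line, say $x_2 > 0$ (so $T_1 = [0,x_2,\infty]$ on the right half-plane side) and $x_4 < 0$.

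\textbf{Key computation in the normalized picture.} In this normalization I would compute the two tangent points explicitly. For an ideal triangle with one vertex at $\infty$ and the other two at real points $a<b$, its inscribed circle is tangent to the vertical edge $\{a+iy\}$ at height $y = b-a$ (equivalently, the tangent point on that edge is at Euclidean height equal to the distance between the two finite vertices). Applying this: the tangent point $p_1$ of $T_1 = [0,x_2,\infty]$ on the edge $c = \{iy\}$ sits at height $y_1 = x_2$ (here the two finite vertices are $0$ and $x_2$), and the tangent point $p_2$ of $T_2 = [x_4,0,\infty]$ on $c$ sits at height $y_2 = |x_4| = -x_4$ (the two finite vertices being $x_4$ and $0$). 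The hyperbolic signed distance along the imaginary axis from $iy_1$ to $iy_2$ is $\ln(y_2/y_1)$ up to sign; with the orientation convention that $T_1$ is on the left of $c$, I would check the sign works out to $s_c = \ln(y_1/y_2)$ or its negative, and then verify that $\ln\big((x_1-x_2)(x_3-x_4)/((x_1-x_4)(x_2-x_3))\big)$ reduces, after substituting $x_1=0$, $x_3=\infty$ (i.e. taking the limit, so the factors involving $x_3$ cancel in ratio), to $\ln\big((-x_2)/(-x_4)\big) = \ln(x_2/(-x_4)) = \ln(y_1/y_2)$, matching.

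\textbf{Main obstacle.} The genuine content is not any of these computations individually but getting the orientation/sign bookkeeping exactly right: I must confirm that "$T_1$ on the left, $T_2$ on the right" together with "$x_1,x_2,x_3,x_4$ in counterclockwise order" is consistent with the normalization chosen, and that the resulting sign of the signed distance matches the sign of the logarithm of the cross-ratio as written (rather than its reciprocal). This is best handled by tracking the positive-shear example of Figure \textbf{\ref{fig:shear}} through the normalization as a sanity check. The remaining loose end is to justify the claim about the tangent-point height of an ideal triangle's inscribed circle; this follows from the fact that the inscribed circle of $[a,b,\infty]$ is the Euclidean semicircle centered at $\tfrac{a+b}{2}$ passing through... — more precisely one uses that the incircle is invariant under the order-three symmetry of the ideal triangle and computes directly in the half-plane model, a short standard calculation I would state without grinding through.
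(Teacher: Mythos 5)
Your proposal is correct; note that the paper states this proposition without proof, so there is nothing to compare it against, and your normalization-by-M\"obius argument is the standard one. Two small points of bookkeeping. First, in the reduction of the cross-ratio at $x_1=0$, $x_3=\infty$, the factors involving $x_3$ do not cancel to $1$: $(x_3-x_4)/(x_2-x_3)\to -1$, so the ratio reduces to $\frac{(0-x_2)\cdot(-1)}{0-x_4}=\frac{x_2}{-x_4}$; your intermediate expression $\ln\big((-x_2)/(-x_4)\big)$ is the logarithm of a negative number, although the value you then write, $\ln\big(x_2/(-x_4)\big)$, is the correct one. Second, the sign you left open does work out as needed: since $T_1\subset\{\mathrm{Re}\,z>0\}$ and $T_2\subset\{\mathrm{Re}\,z<0\}$, the convention ``$T_1$ on the left'' orients $c$ from $\infty$ to $0$ (decreasing imaginary part), so the signed distance from $p_1=ix_2$ to $p_2=i(-x_4)$ is exactly $\ln\big(x_2/(-x_4)\big)$, matching \eqref{eq:sh-in-cr}. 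Your tangent-point claim (the incircle of $[a,b,\infty]$ touches the vertical edge through $a$ at height $|b-a|$) is most quickly justified as you suggest by symmetry: the reflection fixing $b$ and exchanging $a,\infty$ preserves the triangle, hence the incircle, hence fixes the tangent point on that edge, and its fixed point on the vertical line through $a$ is at height $|b-a|$.
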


\subsection{The shearing coordinates}
\indent

Let $\Delta$ be an \emph{ideal triangulation} on $S_{g,n}$.
For any $X\in\Tgn$ represented by $(f,\Sigma)$, $f(\Delta)$ is homotopic to an ideal geodesic triangulation of $\Sigma$.
For each edge $c\in\Delta$, the shearing on $f(c)$ under the hyperbolic metric of $\Sigma$ is denoted by $s_c(X)$.
(To define $s_c(X)$, we can lift the map $f: S_{g,n}\to \Sigma$ to the universal covers.)
Note that $s_c(X)$ is independent of the choice of the representation $(f,\Sigma)$, thus well-defined on $\Tgn$.

We can recover the hyperbolic structure by gluing those hyperbolic ideal triangles with the data of triangulation and shears.
See Figure \ref{fig:ExpS11} for an example, which shows an ideal triangulation of $S_{1,1}$ in the universal covering space.

\begin{figure} [tb]
  \centering
  \includegraphics[scale=0.88]{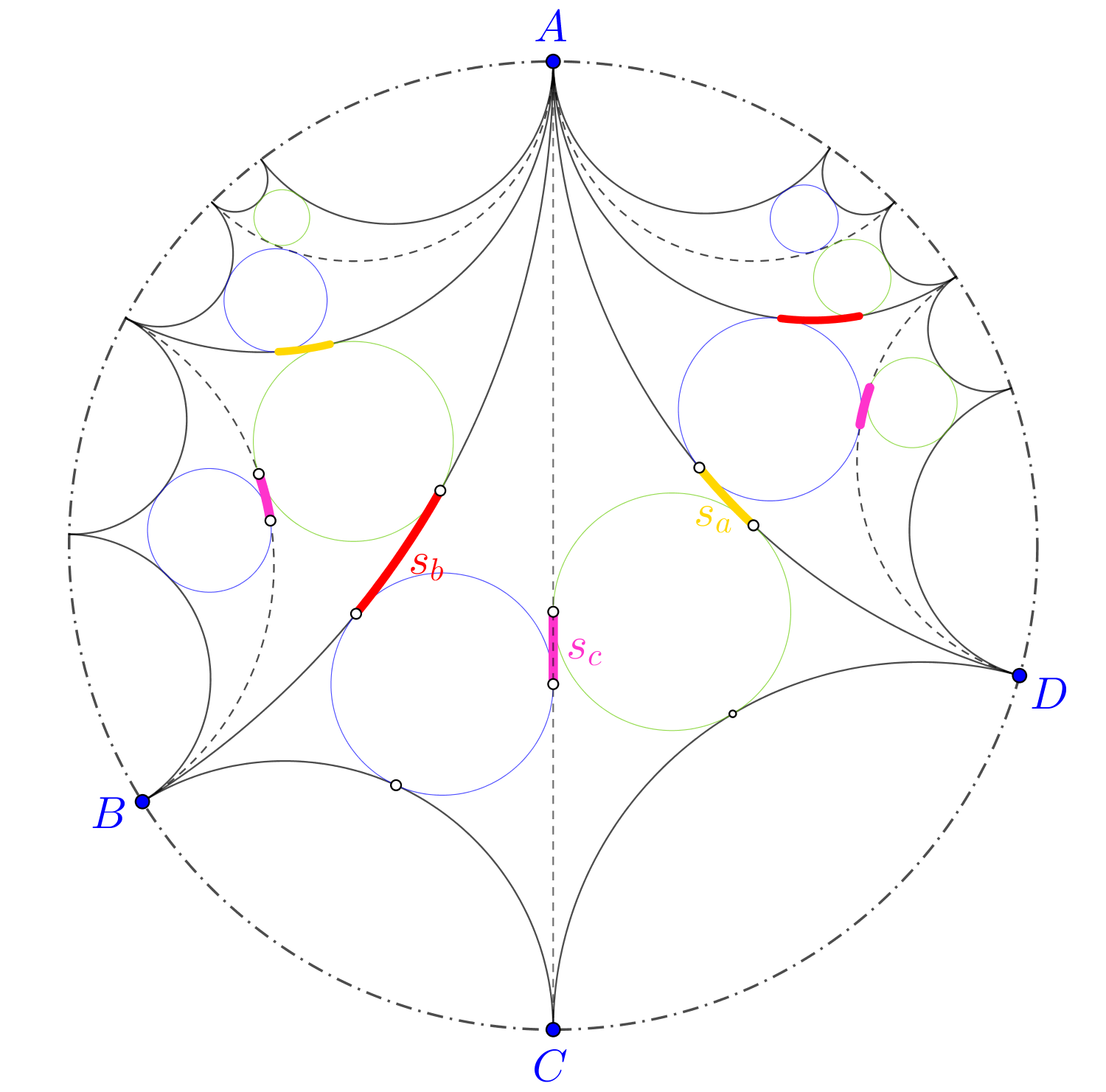}
  \caption{An ideal triangulation of $S_{1,1}$ in $\mathbb{D}$. The ideal quadrilateral $ABCD$ is a fundamental domain. The shear on each edge is labelled as a colored segment.}\label{fig:ExpS11}
\end{figure}

\begin{thm}
{\rm\cite[Theorem 3.6]{BBFS13}}
The map $Sh_\Delta$
$:\mathcal{T}_{g,n}\to
\mathbb{R}^{6g-6+3n}$, defined by \[Sh_\Delta(X):=(s_c(X))_{c\in\Delta}\ ,\]
is a \homeo\ onto its image.
\end{thm}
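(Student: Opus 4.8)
The plan is to produce the inverse of $Sh_\Delta$ from the developing map of the ideal triangulation and then invoke invariance of domain, so that the only steps requiring a hands-on argument are the continuity and the injectivity of $Sh_\Delta$.

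\emph{Developing construction.} Fix a lift $\widetilde\Delta$ of $\Delta$ to the universal cover of $S_{g,n}$ and a base triangle $T_0\in\widetilde\Delta$. Given $s=(s_c)_{c\in\Delta}\in\mathbb{R}^{6g-6+3n}$, place $T_0$ as a standard ideal triangle in $\mathbb{H}$; across each edge $c$, formula \eqref{eq:sh-in-cr} pins down the position of the opposite ideal vertex (normalizing $c=[\infty,0]$ with the near vertex at $-1$, the far vertex sits at $e^{s_c}$), so developing successively across the edges of $\widetilde\Delta$ yields a map to $\mathbb{H}$ together with a holonomy representation $\rho_s:\pi_1(S_{g,n})\to\mathrm{PSL}(2,\mathbb{R})$ whose matrix entries are built from fixed elementary matrices and monomials in $e^{\pm s_c/2}$, hence depend real-analytically on $s$.

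\emph{Admissible locus and image.} For each puncture $p$ the holonomy of a small loop around $p$ is a product of the elementary shear transformations of the edges crossed while circling $p$; it is parabolic \iof a single linear relation $\sum_c m_p(c)\,s_c=0$ holds, where $m_p(c)\in\{0,1,2\}$ is the number of ends of $c$ at $p$. In the incidence graph of $\Delta$ (one vertex per puncture, one edge per edge of $\Delta$), no triangle of $\Delta$ can have its three sides joining two punctures of opposite colour in a $2$-colouring, and a triangle with a repeated vertex produces an edge-loop; so this graph is never both bipartite and loopless, and a short graph-theoretic check then shows the $n$ relations are linearly independent. They therefore cut out a linear subspace $V\subset\mathbb{R}^{6g-6+3n}$ of dimension $6g-6+2n=\dim\Tgn$. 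Every $X\in\Tgn$ is a complete finite-area surface, so around each puncture the holonomy is parabolic and the relations hold for its shears; thus $Sh_\Delta(\Tgn)\subset V$. (The converse — that every $s\in V$ gives a discrete, faithful, complete representation, i.e.\ a genuine point of $\Tgn$ — is Thurston's theorem on shearing coordinates \cite{Thu86,BBFS13}; it is the main geometric input, though it is not logically needed for the present statement once invariance of domain is invoked. An alternative route is to observe that the shears are the scaling-invariant ratios of Penner's $\lambda$-lengths and to deduce the statement from Penner's parametrization of the decorated \Teich space.)

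\emph{Continuity, injectivity, conclusion.} Continuity of $Sh_\Delta$: realizing $X$ by its Fuchsian holonomy $\rho_X$, the ideal endpoints of the lifts of edges of $\Delta$ are fixed points of peripheral parabolic elements of $\rho_X$, hence vary continuously (indeed real-analytically) with $X$, and by \eqref{eq:sh-in-cr} each $s_c(X)$ is a cross-ratio of four of them. Injectivity: if $s_c(X)=s_c(X')$ for all $c$, normalize representative developing maps of $X$ and $X'$ so that they coincide on $T_0$; the equal shears force the opposite vertex across every edge to the same point, so an induction on the triangles of $\widetilde\Delta$ shows the developing maps agree everywhere, whence $X$ and $X'$ represent the same marked hyperbolic structure. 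Finally $Sh_\Delta:\Tgn\to V$ is a continuous injection between topological manifolds of the same dimension $6g-6+2n$ (with $\Tgn\cong\mathbb{R}^{6g-6+2n}$), so by invariance of domain it is open onto its image and hence a \homeo\ onto its image. The main obstacle is the geometric content of the middle step: setting up the parabolicity/cusp relations correctly and, for the converse direction, the classical fact that admissible shear data really do assemble into hyperbolic structures in $\Tgn$; once these are granted the remaining topology is routine.
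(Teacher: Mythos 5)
The paper does not prove this statement at all: it is quoted verbatim from \cite[Theorem 3.6]{BBFS13} (and the subsequent reduction to the subspace $\mathcal{C}_\Delta$ is likewise referred to \cite{BBFS13}), so there is no in-paper argument to compare against. Your proof is a legitimate self-contained alternative, and its logic is sound: continuity plus injectivity of $Sh_\Delta$, together with the fact that the image lies in a linear subspace $V$ of dimension exactly $6g-6+2n$ and that $\Tgn$ is a topological manifold of that same dimension (Fenchel--Nielsen), do yield a \homeo\ onto the image by invariance of domain, and you correctly observe that surjectivity onto the admissible cone (Thurston's harder theorem) is not needed for this conclusion. Your normalization in the developing step ($c=[\infty,0]$, near vertex $-1$, far vertex $e^{s_c}$) is consistent with \eqref{eq:sh-in-cr}, and the injectivity induction is the standard developing-map rigidity argument. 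The one place that needs tightening is the dimension count, which your argument genuinely relies on (if the $n$ cusp relations were dependent, $\dim V$ would exceed $6g-6+2n$ and invariance of domain would no longer give continuity of the inverse): the graph-theoretic sentence as written is garbled. The clean statement is that the puncture--edge incidence graph of $\Delta$ is connected (it is the $1$-skeleton of a cell decomposition of a connected surface) and contains an odd closed walk (the boundary of any triangle has length $3$), hence is not bipartite-and-loopless; for a connected graph the unsigned incidence matrix, with loops counted twice exactly as in your $m_p(c)$, has full rank $n$ precisely in this case, so the relations are independent and $\dim V=6g-6+2n$. With that repaired, the proof stands; note that the cited route through \cite{Thu86,BBFS13} (or Penner's $\lambda$-lengths) proves strictly more, namely that the image is all of $\mathcal{C}_\Delta$ intersected with the admissible locus and that the parametrization is real-analytic, which the rest of the paper does quietly use when it speaks of $\mathcal{C}_\Delta$ as the image and computes volumes there.
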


The shear parameters are not all independent.
At each puncture, the completeness of the hyperbolic structure induces a linear equation of the shears on the edges emitting from it. In fact£¬ the sum of these shears should be zero. Since there are $n$ punctures,
there are $(6g-6+2n)$ independent parameters,
 which coincides with the dimension of $\mathcal{T}_{g,n}$.

\begin{prop}
The \Sh\ reduces to a \homeo\
from $\Tgn$ to a   linear subspace
$\mathcal{C}_\Delta$ of dimension $(6g-6+2n)$.
\end{prop}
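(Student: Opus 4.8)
The plan is to show two things: first, that the image of $Sh_\Delta$ lies in the linear subspace $\mathcal{C}_\Delta$ cut out by the $n$ puncture equations, and second, that $Sh_\Delta$ maps $\Tgn$ \emph{onto} $\mathcal{C}_\Delta$ homeomorphically. Since the previous theorem already gives that $Sh_\Delta$ is a homeomorphism onto its image, the only real content is identifying that image with all of $\mathcal{C}_\Delta$.

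First I would make the puncture relations precise. Around each puncture $p_j$ there is a cycle of triangles of $\Delta$ incident to $p_j$; the edges emanating from $p_j$, taken in cyclic order, carry shears $s_{c_1},\dots,s_{c_{k_j}}$. Developing this chain of ideal triangles into $\mathbb{H}$ with the puncture lifted to $\infty$, each triangle contributes a horizontal translation to the next ideal vertex on $\partial\mathbb{H}$ by an amount governed by the shear (via the cross-ratio formula of Proposition \ref{prop:sh-in-cr}), and the holonomy around the puncture is parabolic fixing $\infty$ precisely when the total translation is consistent, which works out to $\sum_{i} s_{c_i} = 0$. (When an edge is incident to $p_j$ on both sides it is counted twice, with the appropriate sign.) This gives $n$ linear equations $\mathcal{R}_j(\vec s)=0$; one checks these are linearly independent because each puncture-cycle involves at least one edge not shared with another puncture's defining relation in an essential way, or more simply by a rank count against the known dimension. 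Hence $\mathrm{Im}(Sh_\Delta)\subseteq \mathcal{C}_\Delta$ and $\dim\mathcal{C}_\Delta = (6g-6+3n)-n = 6g-6+2n = \dim\Tgn$.

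Next, for surjectivity onto $\mathcal{C}_\Delta$, I would invoke invariance of domain: $Sh_\Delta:\Tgn\to\mathcal{C}_\Delta$ is a continuous injection between manifolds of the same dimension $6g-6+2n$, so its image is open in $\mathcal{C}_\Delta$. To get that the image is also closed (hence everything, since $\mathcal{C}_\Delta\cong\mathbb{R}^{6g-6+2n}$ is connected), I would use properness: a sequence $X_m\in\Tgn$ leaving every compact set must have some shear tending to $\pm\infty$ (equivalently, by the gluing construction, the hyperbolic structures degenerate exactly when some shear blows up), so $Sh_\Delta(X_m)$ also leaves every compact set of $\mathcal{C}_\Delta$. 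Thus $Sh_\Delta$ is a proper injective local homeomorphism onto a connected manifold, hence a homeomorphism $\Tgn\xrightarrow{\ \sim\ }\mathcal{C}_\Delta$.

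The main obstacle is the properness step — verifying that $Sh_\Delta$ is proper as a map into $\mathcal{C}_\Delta$, i.e. that a divergent sequence of hyperbolic structures forces a shear coordinate to diverge, and conversely that finite shear data always glues to a genuine (complete, finite-area) hyperbolic surface in $\Tgn$ rather than a degenerate or incomplete one. This is where the puncture equations do real work: they are exactly the conditions guaranteeing the glued-up structure is complete, so once $\vec s\in\mathcal{C}_\Delta$ one must check the Thurston gluing construction produces a point of $\Tgn$ and that this is continuous and inverse to $Sh_\Delta$. I would cite \cite{Thu86} (and \cite{BBFS13}) for the gluing construction, so that in the end this proposition reduces to a bookkeeping argument with the puncture relations plus the general topological lemma on proper injective maps between equidimensional manifolds.
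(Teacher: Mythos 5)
The paper itself offers no argument here beyond the sentence preceding the statement (the completeness relation at each puncture) and the pointer ``See \cite{BBFS13} for the proofs'', so your proposal has to be judged as a reconstruction of the standard argument. Its architecture --- (i) the image satisfies the $n$ puncture relations, (ii) these relations are independent, so $\dim\mathcal{C}_\Delta=6g-6+2n$, (iii) injectivity (from the previous theorem) plus invariance of domain plus properness gives surjectivity --- is the right one and is what one finds in \cite{Thu86} and \cite{BBFS13}. Two of your steps, however, are shaky as written. First, the mechanism you give for the puncture relation is off: developing the cycle of triangles around a puncture placed at $\infty$, the holonomy of the loop around the puncture has the form $z\mapsto e^{\sigma}z+c$, where $\sigma$ is the multiplicity-counted sum of the shears on the incident edges; completeness is exactly the condition that this element be parabolic, i.e.\ that the multiplier $e^{\sigma}$ equal $1$. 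It is not a matter of ``the total translation being consistent'' --- the conclusion $\sigma=0$ is correct, but the dilation factor is the point. Second, your fallback justification of linear independence, ``a rank count against the known dimension'', is circular: that $\dim\mathcal{C}_\Delta=6g-6+2n$ is precisely what is being proved, and a rank defect would only be contradicted after surjectivity onto $\mathcal{C}_\Delta$ is established, which in your scheme uses the dimension count. A clean fix: the matrix of the $n$ relations is the unsigned vertex--edge incidence matrix (entries $0,1,2$) of the graph whose vertices are the punctures and whose edges are the edges of $\Delta$; every triangle of $\Delta$ gives a closed walk of odd length $3$, so every component of this graph is non-bipartite, and the unsigned incidence matrix of a non-bipartite component has full rank; hence the $n$ relations are independent.

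Finally, the real content of the statement --- that every $\vec{s}\in\mathcal{C}_\Delta$ is realized by a complete hyperbolic structure, with continuous inverse --- is in your write-up deferred to properness of $Sh_\Delta$ and to Thurston's gluing construction, neither of which you verify (properness would need, e.g., control of lengths by shears as in Theorem \ref{prop:lth-in-sh} together with a lower length bound, or else the explicit completion argument of \cite{Thu86}). Since the paper delegates exactly this to \cite{BBFS13}, your reduction is acceptable at the level of the paper; just be aware that as a self-contained proof the surjectivity/properness step is a genuine remaining gap, not bookkeeping.
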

See \cite{BBFS13} for the proofs.

\subsection{Relation between shear and length}
\label{sec:sh-to-l}

\indent

Let $X=(f,\Sigma)$ be a point in $\Tgn$,
 and let $\Gamma$ be a Fuchsian group such that $\Sigma=\mathbb{H}/\Gamma$.
In the following, we use the trace formula (\ref{eq:length-tr}) to represent the shear as a function of hyperbolic lengths.
$\Gamma$.

Since each ideal vertex is corresponding to a fixed point of some parabolic element in $\Gamma$,
we first recall some basic properties of parabolic elements in $\mathrm{SL}_2(\mathbb{R})$.

\begin{lem}\label{lem:parab}
Suppose that $g=\left[\begin{smallmatrix}
   A & B  \\
   C & D  \\
\end{smallmatrix}\right]
\in \mathrm{SL}_2(\mathbb{R})$ is a parabolic element. Then\\
1. $\left| \mathrm{tr}\left( g \right) \right|=\left| A+D \right|=2$; \\
2. The unique fixed point of $g$ on the ideal boundary is $x=\frac{A-D}{2C}$.
\end{lem}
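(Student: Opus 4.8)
The plan is to establish both statements by an elementary analysis of the M\"obius action $z\mapsto \frac{Az+B}{Cz+D}$ of $g$ on $\mathbb{H}\cup\ptl\mathbb{H}$, splitting into the cases $C\neq 0$ and $C=0$.

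When $C\neq 0$, I would write down the fixed-point equation $Cx^2+(D-A)x-B=0$ for $x\in\ptl\mathbb{H}\cap\mathbb{R}$ and compute its discriminant, which after using $AD-BC=1$ to eliminate $BC$ becomes $(A+D)^2-4$. Since a parabolic element has a single fixed point on the boundary and none in the interior $\mathbb{H}$, this quadratic must have a real double root; that is exactly the condition $(A+D)^2=4$, which gives part 1, and the double root is then $x=\frac{A-D}{2C}$, which gives part 2. When $C=0$, the map is affine, $z\mapsto \frac{A}{D}z+\frac{B}{D}$ with $AD=1$; it fixes $\infty$ and it has a second, finite fixed point unless $A=D$, so parabolicity forces $A=D=\pm1$ and again $|A+D|=2$, the unique fixed point now being $\infty$. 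For part 1 one may alternatively invoke that every parabolic element of $\mathrm{SL}_2(\mathbb{R})$ is conjugate to $\pm\left[\begin{smallmatrix}1&t\\0&1\end{smallmatrix}\right]$ with $t\neq 0$, together with the conjugacy invariance of the trace.

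The argument is routine throughout; the only points needing a little care are the sign ambiguity from working in $\mathrm{SL}_2(\mathbb{R})$ rather than $\mathrm{PSL}_2(\mathbb{R})$---which is harmless here, since $g$ and $-g$ have the same fixed points and traces of equal absolute value---and the degenerate configuration $C=0$, in which the fixed point lies at $\infty$ and the formula $\frac{A-D}{2C}$ does not directly apply; this case is excluded whenever the ideal vertex under consideration is a finite point of $\ptl\mathbb{H}$, as one may arrange by normalizing the Fuchsian group. Neither point constitutes a genuine obstacle.
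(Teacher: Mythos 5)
Your argument is correct and complete: the fixed-point equation $Cx^{2}+(D-A)x-B=0$ has discriminant $(D-A)^{2}+4BC=(A+D)^{2}-4$ once $AD-BC=1$ is used, so parabolicity (a single boundary fixed point, none in $\mathbb{H}$) forces the double root $x=\frac{A-D}{2C}$ and $|A+D|=2$, and you handle the two genuine caveats --- the $\pm g$ ambiguity in $\mathrm{SL}_2(\mathbb{R})$ and the degenerate case $C=0$, where the fixed point is $\infty$ --- correctly. The paper itself states this lemma without proof as a standard fact (and in its later use, Proposition~\ref{prop:tr-fixpt}, explicitly assumes $x_i\neq\infty$, i.e.\ $C\neq 0$, exactly as you note), so your elementary verification is precisely the routine argument the authors are implicitly invoking.
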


\begin{prop}\label{prop:tr-fixpt}
Suppose that
\[{g}_{i}=\left[ \begin{matrix}
   {{A}_{i}} & {{B}_{i}}  \\
   {{C}_{i}} & {{D}_{i}}  \\
\end{matrix} \right]
\in \mathrm{SL}_2(\mathbb{R}),\ i=1,2
\]
represent two parabolic elements in the Fuchsian group $\Gamma$, with fixed point $x_1,x_2\neq\infty$ on the boundary. Then
\begin{equation}
{\rm tr}\left( {{g}_{1}}\cdot{{g}_{2}} \right)=\frac{1}{2}{\rm tr}\left( {{g}_{1}} \right){\rm tr}\left( {{g}_{2}} \right)-{{C}_{1}}{{C}_{2}}{{\left( {{x}_{1}}-{{x}_{2}} \right)}^{2}}.
\end{equation}
In particular, ${{C}_{1}}{{C}_{2}}{{\left( {{x}_{1}}-{{x}_{2}} \right)}^{2}}$ is invariant under conjugation.
\end{prop}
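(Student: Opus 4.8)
The plan is to compute $\mathrm{tr}(g_1 g_2)$ directly from the matrix entries, and then rewrite the answer using Lemma \ref{lem:parab}. First I would multiply the two matrices to get
\[
g_1 g_2 = \begin{bmatrix} A_1 A_2 + B_1 C_2 & A_1 B_2 + B_1 D_2 \\ C_1 A_2 + D_1 C_2 & C_1 B_2 + D_1 D_2 \end{bmatrix},
\]
so that $\mathrm{tr}(g_1 g_2) = A_1 A_2 + B_1 C_2 + C_1 B_2 + D_1 D_2$. The target expression is $\tfrac12 \mathrm{tr}(g_1)\mathrm{tr}(g_2) - C_1 C_2 (x_1 - x_2)^2$, so the real content is to verify the identity
\[
A_1 A_2 + B_1 C_2 + C_1 B_2 + D_1 D_2 = \tfrac12 (A_1 + D_1)(A_2 + D_2) - C_1 C_2 (x_1 - x_2)^2.
\]

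The key step is to substitute $x_i = \tfrac{A_i - D_i}{2 C_i}$ from Lemma \ref{lem:parab}, so that $C_1 C_2 (x_1 - x_2)^2$ becomes a polynomial in the entries: expanding, $(x_1 - x_2)^2 = \big(\tfrac{A_1 - D_1}{2C_1} - \tfrac{A_2 - D_2}{2C_2}\big)^2$, hence
\[
C_1 C_2 (x_1 - x_2)^2 = \frac{\big(C_2(A_1 - D_1) - C_1(A_2 - D_2)\big)^2}{4 C_1 C_2}.
\]
I would then clear denominators and reduce everything to a polynomial identity in $A_i, B_i, C_i, D_i$. At this stage I expect a cancellation that does not close on the nose, and this is where the hypotheses enter: $g_1, g_2$ are parabolic, so $\mathrm{tr}(g_i) = \pm 2$ by Lemma \ref{lem:parab}(1), and each has determinant $1$, i.e. $A_i D_i - B_i C_i = 1$. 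Using $A_i D_i - 1 = B_i C_i$ to eliminate $B_i$ (equivalently $B_i = (A_i D_i - 1)/C_i$, valid since $C_i \neq 0$ as the fixed points are finite), together with $A_i + D_i = \pm 2$, should collapse the remaining terms. Concretely, with $A_i + D_i = \varepsilon_i 2$ one has $A_i D_i = A_i(\varepsilon_i 2 - A_i)$, and the cross terms $B_1 C_2 + C_1 B_2$ rewrite as $\tfrac{C_2}{C_1}(A_1 D_1 - 1) + \tfrac{C_1}{C_2}(A_2 D_2 - 1)$; matching this against the expansion of $C_1 C_2(x_1-x_2)^2$ is the crux.

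The main obstacle is purely bookkeeping: making the algebra close requires using \emph{all} the parabolic data (both the trace normalization and $\det = 1$) in the right order, and it is easy to be left with a spurious term that only vanishes after a second application of $A_i + D_i = \pm 2$. Once the trace identity is established, the final assertion is immediate: $\mathrm{tr}(g_1 g_2)$ and $\mathrm{tr}(g_1)\mathrm{tr}(g_2)$ are conjugation-invariant (trace is a class function, and simultaneous conjugation $g_i \mapsto h g_i h^{-1}$ fixes $g_1 g_2 \mapsto h g_1 g_2 h^{-1}$), so the difference $\tfrac12 \mathrm{tr}(g_1)\mathrm{tr}(g_2) - \mathrm{tr}(g_1 g_2) = C_1 C_2 (x_1 - x_2)^2$ is conjugation-invariant as well. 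One caveat to record: conjugating by an element sending a fixed point to $\infty$ is not allowed in the formula as stated (it assumes $x_1, x_2 \neq \infty$), but this is not needed for the invariance claim, which only concerns conjugations keeping both fixed points finite — or, more robustly, one notes the quantity $C_1 C_2(x_1-x_2)^2$ extends continuously and the identity persists by density.
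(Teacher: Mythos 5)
Your proposal is correct and follows essentially the paper's own route: a direct computation of $\mathrm{tr}(g_1g_2)$ from the matrix product, combined with Lemma \ref{lem:parab} and the constraints $\det g_i=1$, $\mathrm{tr}(g_i)=\pm2$; the only cosmetic difference is that the paper eliminates $B_i,D_i$ (writing $D_i=A_i-2x_iC_i$, $B_i=-x_i^2C_i$) rather than eliminating $x_i$, which makes the cancellation you worried about appear at once via $A_i-x_iC_i=\tfrac12(A_i+D_i)$. The bookkeeping you flagged does close exactly as you set it up: with $t_i=A_i+D_i$, $u_i=A_i-D_i$, the conditions give $B_iC_i=-u_i^2/4$, and both sides reduce to $\tfrac12(t_1t_2+u_1u_2)-\tfrac{C_2u_1^2}{4C_1}-\tfrac{C_1u_2^2}{4C_2}$, so no gap remains.
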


\begin{proof}
By Lemma \ref{lem:parab}, one gets the equations for $i=1,2$:
\begin{align*}
   & A_iD_i-B_iC_i=1,\\
   & {{\left( A_i+D_i \right)}^{2}}=4,\\
   & A_i-D_i=2x_iC_i.
\end{align*}
Note that $x_i\neq\infty$ implies $C_i\neq0$. Solve for $B_i,D_i$ :
\begin{align*}
   & D_i=A_i-2x_iC_i,\\
   & B_i=-{{x_i}^{2}}C_i.
\end{align*}
Then
\begin{align*}\textrm{tr}\left( {{g}_{1}}\cdot{{g}_{2}} \right)&=
A_1 A_2+B_1 C_2+B_2 C_1+D_1 D_2\\
&=2(A_1-x_1C_1)(A_2-x_2C_2)-C_1C_2(x_1-x_2)^2\\
&=2\left(A_1-\frac{A_1-D_1}2\right)
\left(A_2-\frac{A_2-D_2}2\right)-C_1C_2(x_1-x_2)^2\ .
\end{align*}
\end{proof}
\begin{remark}
 If $g_1$ and $g_2$ are distinct parabolic elements, then either $g_1g_2$ or $g_1g_2^{-1}$ is hyperbolic.
 \end{remark}

\begin{coro}\label{prop:sh-in-len}
Let $T_1,T_2$ be a pair of adjacent ideal triangles  on $S_{g,n}$, with common edge $c$.
There exist four closed curves
$\gamma_1,\gamma_2,\gamma_3,\gamma_4$ on $S_{g,n}$ such that
$s_c(X)$ is an APL function of the hyperbolic length of $\gamma_i$:
\[s_c(X)=F_c\bigg( \ell_{\gamma_1}(X),
\ell_{\gamma_2}(X),\ell_{\gamma_3}(X),
\ell_{\gamma_4}(X)\bigg)\ .\]
The closed curves and the APL function depend only on the topology of $T_1,T_2$.
\end{coro}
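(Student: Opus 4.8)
The plan is to express $s_c$ through traces of elements of the Fuchsian group, combining the cross-ratio formula of Proposition~\ref{prop:sh-in-cr} with the trace identity of Proposition~\ref{prop:tr-fixpt}, then to pass from traces to hyperbolic lengths via~\eqref{eq:length-tr}, and finally to verify directly that the resulting function of four lengths is asymptotically linear. To set things up, fix a representative $X=(f,\Sigma)$ with $\Sigma=\mathbb{H}/\Gamma$ and lift $T_1,T_2$ to the universal cover, so that $T_1=[x_1,x_2,x_3]$, $T_2=[x_1,x_3,x_4]$ and $c=[x_1,x_3]$ with $x_1,x_2,x_3,x_4\in\partial\mathbb{H}$ in counterclockwise order. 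Each $x_i$ is the fixed point of a parabolic $g_i\in\Gamma$ generating the cyclic puncture subgroup at that vertex; after conjugating I may assume $x_i\ne\infty$ for all $i$ (so $C_i\ne 0$), and after replacing $g_i$ by $-g_i$ if needed that $\mathrm{tr}(g_i)=2$ — a normalization that varies continuously in $X$. I would then take $\gamma_1,\gamma_2,\gamma_3,\gamma_4$ to be the closed curves represented by $g_1g_2,\ g_3g_4,\ g_1g_4,\ g_2g_3$, i.e.\ the curves associated to the four outer edges $[x_1,x_2],[x_3,x_4],[x_1,x_4],[x_2,x_3]$ of the ideal quadrilateral $T_1\cup T_2$; by the Remark after Proposition~\ref{prop:tr-fixpt} each of these elements is hyperbolic unless it is parabolic, the latter occurring only in certain degenerate combinatorial configurations, in which case the corresponding factor below is a constant and $\gamma_k$ may be chosen freely. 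All of this depends only on the topological type of $T_1,T_2$.

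Next I would substitute. Since $\mathrm{tr}(g_i)=2$, Proposition~\ref{prop:tr-fixpt} gives $C_iC_j(x_i-x_j)^2=2-\mathrm{tr}(g_ig_j)$, which is nonzero because $x_i\ne x_j$. As $x_1,x_2,x_3,x_4$ are in cyclic order the cross-ratio in~\eqref{eq:sh-in-cr} is positive, so, multiplying numerator and denominator by $C_1C_2C_3C_4$,
\begin{align*}
2s_c&=\ln\frac{(x_1-x_2)^2(x_3-x_4)^2}{(x_1-x_4)^2(x_2-x_3)^2}
=\ln\frac{\bigl[C_1C_2(x_1-x_2)^2\bigr]\bigl[C_3C_4(x_3-x_4)^2\bigr]}{\bigl[C_1C_4(x_1-x_4)^2\bigr]\bigl[C_2C_3(x_2-x_3)^2\bigr]}\\
&=\ln\frac{\bigl(2-\mathrm{tr}(g_1g_2)\bigr)\bigl(2-\mathrm{tr}(g_3g_4)\bigr)}{\bigl(2-\mathrm{tr}(g_1g_4)\bigr)\bigl(2-\mathrm{tr}(g_2g_3)\bigr)},
\end{align*}
the common factor cancelling. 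By~\eqref{eq:length-tr}, $\mathrm{tr}(g_ig_j)=\varepsilon_k\cosh\bigl(\ell_{\gamma_k}(X)/2\bigr)$ for a sign $\varepsilon_k\in\{\pm1\}$, and since each $\gamma_k$ is non-peripheral, $\rho_X(\gamma_k)$ is hyperbolic for every $X$, so this trace never vanishes and $\varepsilon_k$ is constant on the connected space $\mathcal{T}_{g,n}$, hence topological. This gives $s_c(X)=F_c\bigl(\ell_{\gamma_1}(X),\dots,\ell_{\gamma_4}(X)\bigr)$ with
\[
F_c(t_1,t_2,t_3,t_4)=\frac12\ln\left|\frac{\bigl(2-\varepsilon_1\cosh(t_1/2)\bigr)\bigl(2-\varepsilon_2\cosh(t_2/2)\bigr)}{\bigl(2-\varepsilon_3\cosh(t_3/2)\bigr)\bigl(2-\varepsilon_4\cosh(t_4/2)\bigr)}\right|.
\]

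It remains to check that $F_c$ is APL. As $t\to+\infty$ one has
\[
\ln\bigl|2-\varepsilon\cosh(t/2)\bigr|=\frac t2-\ln 2+\ln\bigl|4e^{-t/2}-\varepsilon(1+e^{-t})\bigr|,
\]
whose last term tends to $0$; hence on the cone $\{t_i\ge A\}$ (for $A$ large enough that $F_c$ is defined there) one gets $F_c(t_1,t_2,t_3,t_4)-\tfrac14(t_1+t_2-t_3-t_4)\to 0$, so $F_c$ is asymptotically linear, in particular APL. Combined with Proposition~\ref{prop:length-apl} and the composition law, Proposition~\ref{prop:apl-comp}, this also shows that $s_c$ is APL \wrt the \FN, which is condition (C1).

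I expect the only real difficulty to be organizational rather than conceptual: fixing the parabolics $g_i$ together with a consistent family of $\mathrm{SL}(2,\mathbb{R})$-lifts so that the curves $\gamma_k$ and the function $F_c$ depend only on the topology of $T_1,T_2$; checking that each $2-\mathrm{tr}(g_ig_j)$ is nonzero of constant sign on $\mathcal{T}_{g,n}$, so the real logarithm is legitimate; and pinning down the special combinatorial configurations in which some product $g_ig_j$ is parabolic, where the associated factor is a constant and the variable $t_k$ simply drops out of $F_c$. Once these points are dispatched, the trace computation and the verification of asymptotic linearity are both elementary.
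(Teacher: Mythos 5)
Your proposal is correct and follows essentially the same route as the paper: lift to the universal cover, use the cross-ratio formula for the shear together with the trace identity for products of the puncture parabolics, convert traces to lengths via the length--trace relation, and use connectedness of $\mathcal{T}_{g,n}$ to see the signs (hence the curves and $F_c$) are topological. Your extra touches (normalizing $\mathrm{tr}(g_i)=2$, treating a parabolic product $g_ig_j$ as a constant factor rather than passing to $g_ig_j^{-1}$ as the paper does, and the explicit asymptotic-linearity check) are fine refinements of the same argument, and the missing factor $2$ in the trace--length formula is inherited from the paper's statement of \eqref{eq:length-tr} and does not affect the APL conclusion.
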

\begin{proof}
Denote $\Sigma$ by $\mathbb{H}/\Gamma$ as before.
Let $\widetilde{T_1},\widetilde{T_2}$ be a pair of adjacent preimage of $T_1,T_2$, with ideal vertices $x_1,x_2,x_3,x_4$ in counterclockwise order and a common edge $[x_1,x_3]$.

Each $x_i$ corresponds to a primitive parabolic element $g_i\in\Gamma$.
We may also assume that all $C_i\neq0$, otherwise we can replace $\Gamma$ by an appropriate conjugation.
For $i \neq j$, let $\gamma_{ij}$ be the closed curve corresponding to the invariant geodesic axis of the hyperbolic element $g_ig_j$ or $g_ig_j^{-1}$.
Combining formula (\ref{eq:length-tr}) with \Prop\ \ref{prop:tr-fixpt}, we have:
\[-{{C}_i}{{C}_j}\left( {x}_i-{x}_j \right)^2=
\pm2\cosh\left( \frac{\ell_{\gamma_{ij}}(X)}2 \right)-\frac{{\rm tr}(g_i){\rm tr}(g_j)}2,\]
where ${\rm tr}(g_i)=\pm 2$ are constants.

Since $x_i\neq0$, we can rewrite formula (\ref{eq:sh-in-cr}) as
\begin{align*}
s_c(X)&=
\frac12 \ln \frac
{C_1C_2(x_1-x_2)^2 C_3C_4(x_3-x_4)^2}
{C_1C_4(x_1-x_4)^2 C_2C_3(x_2-x_3)^2}\ .
\end{align*}
It is easy to verify that $s_c(X)$ is APL \wrt the four length functions (using the discussion after
Proposition  \ref{prop:apl-comp}).

Given a pair of ideal triangles, the choices of $g_i$'s and $\gamma_{ij}$'s only depend on the fundamental group of the surface.
Since $\Tgn$ is simply connected, when the hyperbolic structure on the surface changes continuously in $\Tgn$, all signs appeared in the above function remains the same.
As a result, the function is determined by topology.
\end{proof}

Given an ideal triangulation $\Delta$, for each edge $c\in\Delta$ we can choose four closed curves and a function $F_c$ as above.
By \Prop\ \ref{prop:length-apl}, each variable in \Coro\ \ref{prop:sh-in-len} is an APL function \wrt the \FN.
By the composition law in \Prop\ \ref{prop:apl-comp}, we obtain:

\begin{thm}\label{thm:shear-apl}
Given an ideal triangulation $\Delta$ on the surface $S_{g,n}$, the \Sh\  $Sh_\Delta:\Tgn\to\mathbb{R}^{6g-6+3n}$
is APL \wrt the \FN.
\end{thm}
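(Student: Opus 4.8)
The plan is to assemble Theorem \ref{thm:shear-apl} directly from the two ingredients already established: \Coro\ \ref{prop:sh-in-len}, which expresses each individual shear $s_c$ as a fixed APL function of finitely many hyperbolic length functions, and \Prop\ \ref{prop:length-apl}, which says every hyperbolic length function is APL with respect to any fixed \FN. First I would fix an ideal triangulation $\Delta$ and a pants decomposition $\mathcal{P}$, so that $\mathrm{FN}_\mathcal{P}$ identifies $\Tgn$ with an open subset of $\mathbb{R}_+^{3g-3+n}\times\mathbb{R}^{3g-3+n}\subset\mathbb{R}^{6g-6+2n}$; all "APL \wrt the \FN" statements are then statements about functions on this Euclidean coordinate patch. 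For each edge $c\in\Delta$, invoke \Coro\ \ref{prop:sh-in-len} to get closed curves $\gamma_1^c,\dots,\gamma_4^c$ and an APL function $F_c$ with $s_c=F_c(\ell_{\gamma_1^c},\dots,\ell_{\gamma_4^c})$.

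Next I would apply \Prop\ \ref{prop:length-apl} to each of the finitely many curves $\gamma_i^c$ appearing as $c$ ranges over the (finitely many) edges of $\Delta$: each $\ell_{\gamma_i^c}$ is APL \wrt $\mathrm{FN}_\mathcal{P}$. The only subtlety is that "APL" involves a subdivision of the domain into closed cones, and the cones produced for different length functions need not coincide; the standard remedy is to pass to a common refinement. Concretely, each $\ell_{\gamma_i^c}$ is asymptotically linear on each cone of some finite polyhedral decomposition $\mathcal{D}_i^c$ of the coordinate space; taking the common refinement $\mathcal{D}$ of all the $\mathcal{D}_i^c$ (a finite decomposition into closed cones, since it is the common refinement of finitely many finite polyhedral decompositions), every $\ell_{\gamma_i^c}$ is simultaneously asymptotically linear on each cone of $\mathcal{D}$. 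I should also note that $\vec{x}\to\mathcal{C}_\infty$ for a cone $\mathcal{C}$ of $\mathcal{D}$ forces $\vec{x}\to\mathcal{C}'_\infty$ for the (unique) cone $\mathcal{C}'$ of each coarser decomposition containing $\mathcal{C}$, so the asymptotic-linearity limits are inherited on the refinement.

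Then, working cone by cone on $\mathcal{D}$, I would feed the situation into the composition law, \Prop\ \ref{prop:apl-comp}: on a fixed cone $\mathcal{C}\in\mathcal{D}$ the tuple $(\ell_{\gamma_1^c},\dots,\ell_{\gamma_4^c})$ is a vector of APL functions $\mathcal{C}\to\mathbb{R}$, and $F_c$ is APL on the relevant target cone in $\mathbb{R}^4$ (the one into which this tuple maps, which by \Coro\ \ref{prop:sh-in-len} is a fixed topologically determined cone); hence $s_c|_\mathcal{C}=F_c(\ell_{\gamma_1^c},\dots,\ell_{\gamma_4^c})|_\mathcal{C}$ is APL, and since $\mathcal{C}$ was one of finitely many cones covering the domain, $s_c$ is APL \wrt $\mathrm{FN}_\mathcal{P}$. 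Doing this for every edge $c\in\Delta$ and recalling that a vector-valued function is APL precisely when each component is, we conclude that $Sh_\Delta=(s_c)_{c\in\Delta}:\Tgn\to\mathbb{R}^{6g-6+3n}$ is APL \wrt $\mathrm{FN}_\mathcal{P}$. Finally, since by \Prop\ \ref{prop:length-apl} (and the remark following it) the transition between the \FN\ of any two pants decompositions is APL, the property is independent of $\mathcal{P}$, so $Sh_\Delta$ is APL \wrt the \FN.

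I expect the only real bookkeeping obstacle to be the common-refinement step: ensuring that finitely many polyhedral cone decompositions admit a common refinement that is still a finite decomposition into closed cones, and checking that asymptotic linearity of each $\ell_{\gamma_i^c}$ genuinely descends to each cone of the refinement (i.e.\ that $\vec x\to\mathcal C_\infty$ on the refined cone implies the corresponding limit on the coarser cone). Everything else is a direct application of Propositions \ref{prop:apl-comp} and \ref{prop:length-apl} and Corollary \ref{prop:sh-in-len}, carried out over the finite index set of edges of $\Delta$.
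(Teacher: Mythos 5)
Your proposal is correct and follows essentially the same route as the paper: the paper deduces Theorem \ref{thm:shear-apl} by combining Corollary \ref{prop:sh-in-len}, Proposition \ref{prop:length-apl}, and the composition law of Proposition \ref{prop:apl-comp}, exactly as you do. Your extra care about passing to a common refinement of the cone decompositions is just an explicit spelling-out of detail the paper leaves implicit in the composition law, not a different argument.
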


\section{Mirzakhani's counting result and the bounding condition}\label{sec:bounding}
\indent

The original bounding condition in Theorem \ref{thm:count} is proposed by Mirzakhani \cite{Mir16}. It is used to reduce the counting problem in the \Teich space to a problem in some specific cone-shaped region. Here we adopt the definition in \cite{Ara20}.

\subsection{Restatement of the bounding condition}\label{sec:bound}
\indent

Under the \FN\ $(\ell_i,\tau_i)_{i=1}^{3g-3+n}$ adapted to some pants decomposition $\mathcal{P}$, the Teichm\"uller space $\Tgn$ admits a partition into countably many convex polytopes of the form
\[
\mathcal{C}_\mathcal{P}^\mathbf{m}:=
\bigg\{\ Y\in\Tgn\ \bigg|\
m_i\cdot \ell_i(Y) \leqslant \tau_i(Y) \leqslant (m_i+1)\cdot \ell_i(Y)\ \bigg\}
\]
with $\mathbf{m}:=(m_1,\cdots,m_{3g-3+n})
\in \mathbb{Z}^{3g-3+n}$. 

\begin{defn}\label{def:bounding}
A funciton $\mathcal{F}:\mathcal{T}_{g,n}\rightarrow\mathbb{R}_+$
is bounding \wrt the \FN\ $(\ell_i,\tau_i)_{i=1}^{3g-3+n}$,
if for every $Y\in\Tgn$ there exists a constant $C>0$ such that
for every $\mathbf{m}\in\mathbb{Z}^{3g-3+n}$ and every $Z\in\mathrm{Mod}_{g,n}\cdot Y\cap \mathcal{C}_\mathcal{P}^\mathbf{m} \cap \mathcal{F} ^{-1}([0,L])$,
\begin{equation}\label{eq:bounding}
\ell_i(Z)\leqslant C\cdot\frac L
{\max\{|m_i|,|m_i+1|\}}\ .
\end{equation}
\end{defn}

This means that, when the value of $\mathcal{F}$ grows, the length of the $i$-th pants curve grows at a linear rate, and it is also proportional to the twist component.

\begin{prop}\label{prop:fillike}
The bounding condition (\ref{eq:bounding}) is equivalent to the following  condition:\\
for every $Y\in\Tgn$,
\begin{equation}\label{eq:fillike}
\sup_{Z\in\mathrm{Mod}_{g,n}\cdot Y}
\left\{ \frac{\ell_i(Z)+|\tau_i(Z)|}
{\mathcal{F}(Z)} \right\}
<+\infty\ .
\end{equation}
\end{prop}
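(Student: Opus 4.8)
The plan is to prove both implications of the equivalence, exploiting the homogeneity of the polytope decomposition $\{\mathcal{C}_\mathcal{P}^\mathbf{m}\}$ and the definition of the twist coordinate. Fix $Y \in \Tgn$ and a point $Z \in \mathrm{Mod}_{g,n}\cdot Y$. The key geometric observation is that $Z$ lies in exactly one polytope $\mathcal{C}_\mathcal{P}^\mathbf{m}$ (up to boundary faces), where $m_i = \lfloor \tau_i(Z)/\ell_i(Z) \rfloor$; consequently $|\tau_i(Z)|$ and $\max\{|m_i|,|m_i+1|\}\cdot \ell_i(Z)$ are comparable up to an additive error of at most $\ell_i(Z)$, i.e.
\[
\big(\max\{|m_i|,|m_i+1|\}-1\big)\cdot \ell_i(Z) \ \leqslant\ |\tau_i(Z)| \ \leqslant\ \big(\max\{|m_i|,|m_i+1|\}+1\big)\cdot \ell_i(Z)\ .
\]
This dictionary lets me translate the $\mathbf{m}$-indexed bound (\ref{eq:bounding}) into a statement purely about $\ell_i(Z)+|\tau_i(Z)|$ and $\mathcal{F}(Z)$.

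First I would show (\ref{eq:bounding}) $\Rightarrow$ (\ref{eq:fillike}). Given $Z \in \mathrm{Mod}_{g,n}\cdot Y$, set $L := \mathcal{F}(Z)$ so that $Z \in \mathcal{F}^{-1}([0,L])$, and let $\mathbf{m}$ be the index of the polytope containing $Z$. Applying (\ref{eq:bounding}) gives $\ell_i(Z) \leqslant C L / \max\{|m_i|,|m_i+1|\}$, hence $\max\{|m_i|,|m_i+1|\}\cdot \ell_i(Z) \leqslant C\, \mathcal{F}(Z)$. Combining with the comparison inequality above, $\ell_i(Z) + |\tau_i(Z)| \leqslant \ell_i(Z) + (\max\{|m_i|,|m_i+1|\}+1)\ell_i(Z) \leqslant (C+2)\,\mathcal{F}(Z)$, where I absorb the stray $\ell_i(Z)$ using that $\max\{|m_i|,|m_i+1|\} \geqslant 1$ always. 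This bounds the supremum in (\ref{eq:fillike}) by $C+2$, uniformly over the orbit. The converse is similar: assuming (\ref{eq:fillike}) with bound $C'$, for any $\mathbf{m}$ and any $Z$ in the orbit lying in $\mathcal{C}_\mathcal{P}^\mathbf{m} \cap \mathcal{F}^{-1}([0,L])$ one has $\ell_i(Z) \leqslant C'\mathcal{F}(Z) \leqslant C' L$ and $|\tau_i(Z)| \leqslant C' L$; feeding the latter into the lower half of the comparison inequality yields $(\max\{|m_i|,|m_i+1|\}-1)\ell_i(Z) \leqslant |\tau_i(Z)| \leqslant C' L$, and again using $\ell_i(Z) \leqslant C'L$ to handle the $-1$, one gets $\max\{|m_i|,|m_i+1|\}\cdot \ell_i(Z) \leqslant 2C' L$, which is exactly (\ref{eq:bounding}) with constant $2C'$.

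The main obstacle I anticipate is not the algebra of these inequalities but making the comparison between $|\tau_i(Z)|$ and $\max\{|m_i|,|m_i+1|\}\cdot\ell_i(Z)$ fully rigorous at the boundary faces of the polytopes and ensuring the additive error is genuinely bounded by a constant multiple of $\ell_i(Z)$ and not something worse. One must check that the defining inequalities $m_i \ell_i(Z) \leqslant \tau_i(Z) \leqslant (m_i+1)\ell_i(Z)$ do force $|\tau_i(Z)|/\ell_i(Z)$ to lie in the interval $[\,\min\{|m_i|,|m_i+1|\},\ \max\{|m_i|,|m_i+1|\}\,]$ — which requires a small case analysis according to the sign of $m_i$ (for $m_i \geqslant 0$ the ratio is in $[m_i, m_i+1]$; for $m_i \leqslant -1$ it is in $[|m_i+1|, |m_i|]$; the crossover case $m_i = -1$, $m_i=0$ needs separate care since then $\tau_i(Z)$ can vanish). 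Once this bookkeeping is settled, the two implications follow as above with explicit constants, so I would present the equivalence with the constant $C$ in (\ref{eq:bounding}) and the supremum bound in (\ref{eq:fillike}) differing only by universal factors.
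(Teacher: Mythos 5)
Your proposal is correct and follows essentially the same route as the paper's proof: both rest on the defining inequalities $m_i\,\ell_i(Z)\leqslant\tau_i(Z)\leqslant(m_i+1)\,\ell_i(Z)$ together with a case analysis on the sign of $m_i$, which you merely repackage as the two-sided comparison between $|\tau_i(Z)|$ and $\max\{|m_i|,|m_i+1|\}\cdot\ell_i(Z)$. The only slip is the constant in the forward direction, which should be $3C$ rather than $C+2$ (you only know $\ell_i(Z)\leqslant C\,\mathcal{F}(Z)$, not $\ell_i(Z)\leqslant\mathcal{F}(Z)$); this has no effect on the finiteness of the supremum in (\ref{eq:fillike}).
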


\begin{proof}
We write $\ell_i=\ell_i(Z)$ and $\tau_i=\tau_i(Z)$
for simplicity.

Suppose (\ref{eq:bounding}) holds.
When $m_i\geqslant0$ and  $Z\in\mathcal{C}_\mathcal{P}^\mathbf{m}$, we have $0 \leqslant m_i \leqslant \tau_i/\ell_i \leqslant m_i+1$.
Then
\begin{align*}
C\geqslant
(m_i+1)\frac{\ell_i}{\mathcal{F}} &\geqslant\frac{m_i+2}{2}\frac{\ell_i}{\mathcal{F}}\\
&=\frac{\ell_i+(m_i+1)\ell_i}{2\mathcal{F}}
\geqslant\frac{\ell_i+\tau_i}{2\mathcal{F}}\ .
\end{align*}
Thus
\[\frac{\ell_i+\tau_i}{\mathcal{F}}\leqslant2C\ .\]
When $m_i<0$, we have
$0\leqslant-m_i-1\leqslant|\tau_i|/\ell_i\leqslant-m_i$. Then
\begin{align*}
C\geqslant
(-m_i)\frac{\ell_i}{\mathcal{F}} &\geqslant\frac{-m_i+1}{2}\frac{\ell_i}{\mathcal{F}}\\
&=\frac{\ell_i+(-m_i)\ell_i}{2\mathcal{F}}
\geqslant\frac{\ell_i+|\tau_i|}{2\mathcal{F}}\ .
\end{align*}
So (\ref{eq:fillike}) holds.

Now suppose $\mathcal{F}$ satisfies (\ref{eq:fillike}), with upper bound $K>0$. If $Z\in\mathrm{Mod}_{g,n}\cdot Y$ with\\
$0\leqslant m_i\leqslant \tau_i/\ell_i\leqslant m_i+1$, then
\[K\geqslant\frac{\ell_i+\tau_i}{\mathcal{F}}
\geqslant\frac{\ell_i+m_i \ell_i}{\mathcal{F}}\ .\]
If $m_i\leqslant \tau_i/\ell_i\leqslant m_i+1 \leqslant 0$, then
\[K\geqslant\frac{\ell_i-\tau_i}{\mathcal{F}}
\geqslant\frac{\ell_i-(m_i+1)\ell_i}{\mathcal{F}}
=\frac{-m_i\ell_i}{\mathcal{F}}\ .\]
Thus (\ref{eq:bounding}) holds.

Note that each part of the proof utilizes one side of the condition $m_i\leqslant \tau_i/\ell_i\leqslant m_i+1$.

\end{proof}

\subsection{Relation between length and shear}
\indent

Our aim here is to prove that ${\mathcal{F}(X)}=\|Sh_\Delta(X)\|$ satisfies the
inequality (\ref{prop:fillike}).
In the following theorem, we have a formula of the length function in terms of shears. This is the key result in this paper.

\begin{thm}\label{prop:lth-in-sh}
Let $\gamma$ be a non-degenerated closed curve on $S_{g,n}$, and let $X\in\Tgn$ represented by $\mathbb{H}/\Gamma$.
Let $g\in\Gamma$ be a primitive hyperbolic element corresponding to $\gamma$.
Let $(s_1,\cdots,s_K)=Sh_\Delta(X)$ be the \Sh\ of $X$ associated to the triangulation $\Delta$.
Then $|\mathrm{tr}(g)|=2\cosh(\ell_\gamma(X)/2)$ is a polynomial of variables $\{e^{\pm s_j /2}\}_{j=1}^K$ with rational coefficients:
\[\cosh(\ell_\gamma/2)\in \mathbb{Q}
\big[e^{\pm s_1/2},\cdots,e^{\pm s_K/2}\big]\ .\]
The polynomial only depends on the topological type of $\Delta$ and $\gamma$.
\end{thm}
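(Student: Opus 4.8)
The plan is to realize the hyperbolic surface $\mathbb{H}/\Gamma$ concretely by developing the ideal triangulation $\Delta$ into the upper half-plane using the shears as gluing data, so that every relevant matrix in $\mathrm{SL}_2(\mathbb{R})$ has entries lying in the ring $R := \mathbb{Q}[e^{\pm s_1/2},\dots,e^{\pm s_K/2}]$. Fix a lift $\widetilde{T}_0$ of one triangle of $\Delta$ with ideal vertices normalized, say, at $0,1,\infty$. Crossing an edge $c$ of $\Delta$ corresponds to an elementary ``edge matrix'' built from the shear $s_c$ on that edge. Concretely, passing from one ideal triangle to the adjacent one across an edge is given by a matrix of the form $E(s) = \bigl(\begin{smallmatrix} 1 & 0\\ e^{-s} & 1\end{smallmatrix}\bigr)$ up to conjugation by the fixed ``reflection/rotation'' matrix $P = \bigl(\begin{smallmatrix} 1 & 1\\ -1 & 0\end{smallmatrix}\bigr)$ that cyclically permutes $0,1,\infty$; both $E(s)$ and $P$ have entries in $R$ (indeed in $\mathbb{Q}[e^{-s/2},e^{s/2}]$ after the usual $\mathrm{SL}_2$ normalization dividing by $e^{-s/2}$). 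This is exactly the classical ``Penner/Thurston'' edge-path formula for developing a shear-coordinate structure, and I would either cite it or verify it directly from Proposition~\ref{prop:sh-in-cr}.

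The key steps, in order, are: (1) set up the dual tree/edge-path combinatorics of $\Delta$ — pick a spanning structure on the triangles so that each triangle $\widetilde{T}$ over the base triangle is reached by a word $W(\widetilde{T})$ in the matrices $P^{\pm 1}$ and $E(s_{c_1}),\dots$ for the edges crossed along the path; (2) observe that for the primitive hyperbolic element $g$ representing $\gamma$, choosing a geodesic edge-path in $\Delta$ homotopic to $\gamma$ expresses $g$ (up to conjugation, which does not affect $\mathrm{tr}$) as such a word $W$, hence $g$ has entries in $R$; (3) conclude $\mathrm{tr}(g) \in R$, and since $|\mathrm{tr}(g)| = 2\cosh(\ell_\gamma(X)/2)$ by \eqref{eq:length-tr}, we get $\cosh(\ell_\gamma/2) \in R$ as claimed; (4) argue topological invariance of the resulting polynomial: the edge-path representing $\gamma$ and the combinatorial data only depend on $\Delta$ and the homotopy class of $\gamma$, and the individual matrix factors are fixed rational expressions in the $e^{\pm s_c/2}$, so the polynomial is independent of $X$. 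One subtlety is signs: the lift of $g$ to $\mathrm{SL}_2(\mathbb{R})$ is only defined up to $\pm I$, and along a continuously varying family in $\Tgn$ (which is connected and simply connected) the sign choice is locally constant, hence globally consistent, exactly as in the proof of Corollary~\ref{prop:sh-in-len}.

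I would also need to handle the normalization carefully: after scaling each $E(s)$ and $P$ into $\mathrm{SL}_2(\mathbb{R})$ one naturally gets entries in $\mathbb{Q}[e^{\pm s/2}]$ rather than $\mathbb{Q}[e^{\pm s}]$, which is why the statement is phrased with half-shears $e^{\pm s_j/2}$; a product of such matrices stays in $R = \mathbb{Q}[e^{\pm s_1/2},\dots,e^{\pm s_K/2}]$ since $R$ is a ring closed under the relevant operations, and the trace of any word is then automatically in $R$. One should note the shear relations at the punctures (the sum of shears around each puncture is zero, i.e.\ parabolicity of the boundary holonomies) are automatically respected by this development and need not be imposed by hand for the trace computation; they only cut $\mathbb{R}^{6g-6+3n}$ down to $\mathcal{C}_\Delta$.

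The main obstacle I expect is step~(1)–(2): making the edge-path bookkeeping precise enough to be a genuine proof rather than a picture — i.e.\ giving a clean statement that any primitive hyperbolic $g \in \Gamma$ is conjugate to an explicit finite product of the edge matrices $E(s_c)$ and the fixed matrix $P$, with the shears appearing being exactly those of the edges crossed by a geodesic representative of $\gamma$. This is standard in the shearing-coordinate literature (it is how one proves $Sh_\Delta$ is a homeomorphism), so I would likely invoke the development procedure from \cite{Thu86,BBFS13} and only spell out the matrix normalization that keeps us inside $R$. The remaining parts (trace lies in the ring, sign consistency via connectedness, topological dependence) are then short.
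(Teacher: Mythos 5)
Your proposal follows essentially the same route as the paper: the paper also develops $\Delta$ along the sequence of triangles crossed by a lift of $\gamma$, writes the holonomy as a word
$g=H_{\varepsilon_1}(s_1)P_{\varepsilon_1}\cdots H_{\varepsilon_N}(s_N)P_{\varepsilon_N}$
in fixed rational ``turn'' matrices and shear matrices with entries in $\mathbb{Q}\big[e^{\pm s_j/2}\big]$, takes the trace, and gets topological invariance of the signs from the connectedness of $\Tgn$, exactly as you outline. The only caveat is that your sample matrices are not quite right as written (e.g.\ $\bigl(\begin{smallmatrix}1&1\\-1&0\end{smallmatrix}\bigr)$ does not permute $\{0,1,\infty\}$, and $\bigl(\begin{smallmatrix}1&0\\e^{-s}&1\end{smallmatrix}\bigr)$ is not the shear gluing matrix), but since you defer to the standard development procedure this is exactly the bookkeeping the paper's $P_\varepsilon$, $H_\varepsilon(s)$ make precise, not a conceptual gap.
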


\begin{proof}We will give a precise algorithm to compute the matrix $g\in \mathrm{SL}_2(\mathbb{R})$.

Passing to the universal cover,
we fix an orientation of $\gamma$ and choose one intersection of $\gamma$ and $\Delta$ as an initial point
(here we have identify $\gamma$ with the axis of $g$).
Up to a conjugation, we may assume that the initial point of $\gamma$ is the complex number $i$ in the upper half plane, contained in the ideal triangle $T_1=[-1,1,\infty]$.

Traveling forward along $\gamma$,
we have a sequence of triangles
$(T_1,\cdots,T_{N+1}=gT_1)$.
Let $a_i$ be the common ideal edge of $T_i,T_{i+1}$, $i=1,\cdots,N$.
The sequence $\left(a_i\right)_{i=1}^N$ depends only on the type of $\gamma$ and $\Delta$, thus a topological data.
The shear on $a_i$ is denoted by $s_i$.
Our algorithm consists of three steps:

\medskip

\emph{(1) The initial data: ``Left $\&$ Right sequences''.}

\medskip

If $\gamma$ enters $T_i$ through one edge, then it must leave through one of the other two edges.
Given the orientation of $X$ and $\gamma$, one can tell $\gamma$ leave the triangle through the left edge or the right edge.
Thus we can say that $T_{i+1}$ lies on the left or on the right hand side of $T_i$.
Define $\varepsilon_i=+1$ if $T_{i+1}$ lies on the left, $\varepsilon_i=-1$ if on the right.
See Figure \ref{fig:LR} for illustration and examples.

\begin{figure*}[tb]
 \centering
 \subfigure[Turn left, or $\varepsilon=+$.]
  {\includegraphics[scale=1]{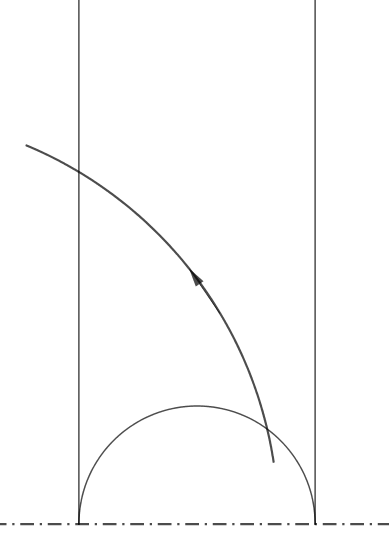}}
 \hspace{15mm}
 \subfigure[Turn right, or $\varepsilon=-$.]
  {\includegraphics[scale=1]{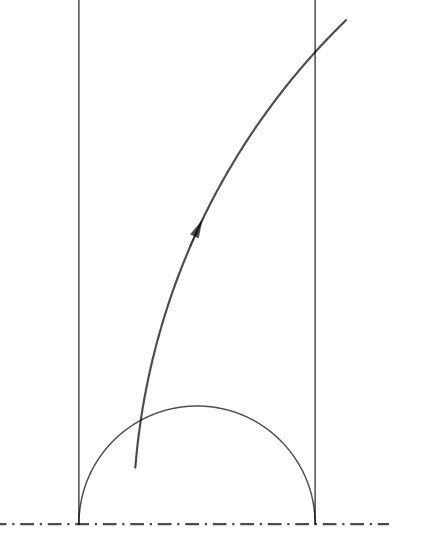}}
 \subfigure[From $T_1$ to $T_4$, the sequence is $(-,+,-).$]
  {\includegraphics[scale=1]{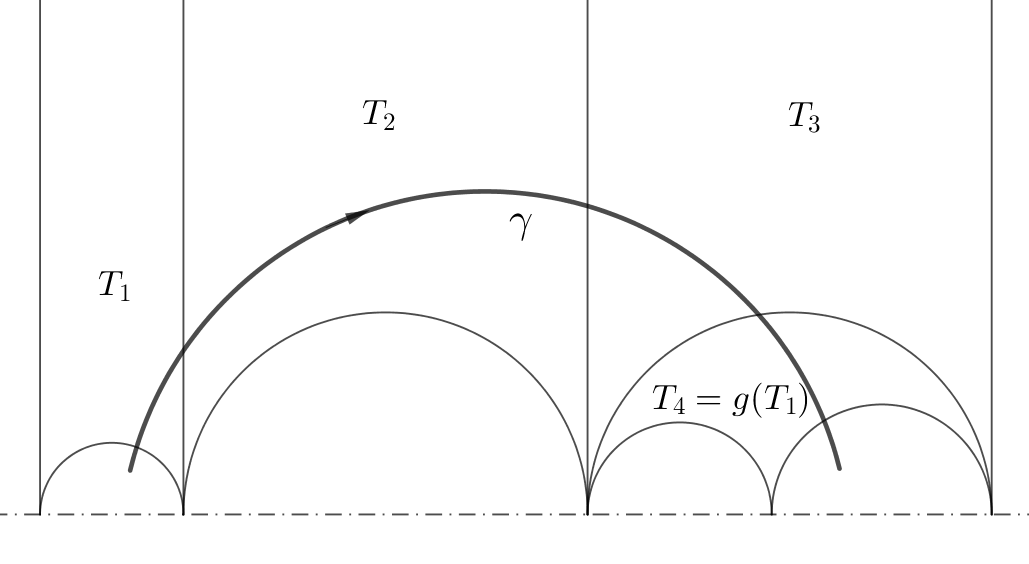}}
 \caption{The 'left-right sequences'.}
 \label{fig:LR}
\end{figure*}

Then we get a sequence of signatures
$\left(\varepsilon_i\right)_{i=1}^N$, which tells how $\gamma$ passing through each triangles along the path.

\medskip

\emph{(2) The basic matrices of shear.}

\medskip

The initial data characterizes how $\gamma$ passing through each triangle.
Now we construct the basic matrices of M\"obius transformation corresponding to the shear deformation.

\medskip

If $\gamma$ leaves through the right edge, we may first apply a M\"{o}bius transformation $R$ to map $(-1,i,1)$ into $(\infty,1+2i,1)$. Then the two triangles $T_1,R(T_1)$ have a common edge $[1,\infty]$ with coincide tangent points.
If $\gamma$ leaves through the left edge, we map $(-1,i,1)$ into $(-1,-1+2i,\infty)$. The corresponding matrices are
\[
R:=\frac12\left(
  \begin{array}{cc}
    3 & -1 \\
    1 & 1 \\
  \end{array}
\right),\
L:=\frac12\left(
  \begin{array}{cc}
    3 & 1 \\
    -1 & 1 \\
  \end{array}
\right)
\]

To unify them, we may define
\[
P_\varepsilon:=\frac12
\left(
  \begin{array}{cc}
    3 & \varepsilon \\
    -\varepsilon & 1 \\
  \end{array}
\right),\ \varepsilon=\pm1
\]
Then $R=P_{-},\ L=P_{+}$.

\medskip

We have mapped the entering edge onto the leaving edge by $P_\varepsilon$ as above.
If $\gamma$ leave through the right edge,
then shearing along the leaving edge means applying a hyperbolic transformation with fixed point $\{1,\infty\}$ and signed translation distance $s$.
In complex coordinate, the function is
$z\mapsto e^s(z-1)+1$.
Similarly, for the left edge case, the complex function is
$z\mapsto e^{-s}(z+1)-1$.
The unified matrices are
\[H_\varepsilon(s):=
\left(
  \begin{array}{cc}
    e^{\ -{\varepsilon s}/{2}} & -\sinh(s/2) \\
    0 & e^{\ {\varepsilon s}/{2}} \\
  \end{array}
\right)
\]
Some calculation shows that the third vertex of $(H_\varepsilon(s)\circ P_\varepsilon) (T_1)$ other than $\pm1,\infty$ is \\$-\varepsilon (1+2e^{-\varepsilon s})$.

In conclusion, we take ${V_\varepsilon(s)} := H_\varepsilon(s)\circ P_\varepsilon$.
It maps $T_1$ to the next triangle along $\gamma$.

\medskip

\emph{(3) The composition diagram.}

\medskip

Let $g$ be a primitive hyperbolic element, corresponding to the translation along $\gamma$ for a single period.
We now compute the matrix representation for $g$.
 It is a composition of basic matrices defined as above.

Let $\left(T_i\right)_{i=1}^{N},\left(a_i\right)_{i=1}^{N}, \left(\varepsilon_i\right)_{i=1}^{N}$ as before.
Denote by $a_0$ be the edge at which $\gamma$ enters $T_1$.
For any $i=1,\cdots,N$, there is a unique isometry $f_i$ that maps $T_i$ into $T_{i+1}$ with the edge $a_{i-1}$ matching $a_{i}$.
Then $$g=f_N \circ\cdots\circ f_1.$$
Denote $V_{\varepsilon_i}(s_{\varepsilon_i})$ by $V_i, i=1,\cdots,N$ to simplify notations.
Note that $f_1=V_1$.

We have the following diagram:
\[\xymatrixcolsep{5pc}
\xymatrix{
   T_1 \ar[r]^{f_1} &
   T_2 \ar[r]^{f_2} \ar[d]_{V_1^{-1}} &
   T_3 \ar[r]^{f_3} \ar[d]^{V_1^{-1}} &
   \cdots \ar[r]^{f_{N-1}} &
   T_N \ar[r]^{f_N} \ar[d]_{V_1^{-1}} &
   T_{N+1} \ar[d]^{V_1^{-1}} \\
   &
  T_1 \ar[r]^{V_2} &
  T_3' \ar[r] \ar[d]^{V_2^{-1}} &
  \cdots \ar[r] &
  T_N' \ar[r] \ar[d]_{V_2^{-1}}  &
  T_{N+1}' \ar[d]^{V_2^{-1}} \\
   & &
  T_1 \ar[r]_{V_3} &
  \cdots \ar[r] &
  \cdots \ar[d]_{V_{N-1}^{-1}} \ar[r] &
  \cdots \ar[d]^{V_{N-1}^{-1}} \\
   & & & &
  T_1 \ar[r]^{V_N} & T^{(N)}_{N+1}   }
\]
By going to the bottom right corner diagonally and then going up, one get
\begin{align*} g&=
V_1 \circ V_2 \circ \cdots V_{N-1} \circ V_N \circ V_{N-1}^{-1} \cdots \circ V_2^{-1} \circ V_2 \circ V_1^{-1} \circ f_1
\\
&=V_1 \circ V_2 \circ \cdots V_{N-1} \circ V_N\\
&=H_{\varepsilon_1}(s_1) P_{\varepsilon_1}\cdots
H_{\varepsilon_N}(s_N) P_{\varepsilon_N}
\ .\end{align*}

It is obvious that each element in the matrix is a polynomial of $e^{\ \pm s_i/2}$, with rational coefficients.
The polynomials are determined by topology.
\end{proof}

For the twist $\tau_i$ on each pants curve, we  have a similar result.

\begin{coro}\label{prop:tw-in-sh}
With the notations in \Def\ \ref{def:bounding} and \Thm\ \ref{prop:lth-in-sh}, we have:
\[\cosh^2(\tau_i)\in \mathbb{Q}
\big({e}^{\pm s_1/2},\cdots,e^{\pm s_K/2}\big)\ .\]
Furthermore, on each mapping class group orbit, its denominator is bounded away from 0.
\end{coro}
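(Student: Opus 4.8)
The plan is to derive the formula for $\cosh^2(\tau_i)$ by realizing the twist parameter through a suitable element of the Fuchsian group and applying Theorem \ref{prop:lth-in-sh}. First I would recall that the twist $\tau_i$ along the pants curve $\alpha_i$ is, up to normalization, measured by the relative displacement of the two sides glued along $\alpha_i$; concretely, one can write $\tau_i$ in terms of the hyperbolic length $\ell_i := \ell_{\alpha_i}$ and the traces of certain group elements. A standard way to extract the twist is to pick the primitive hyperbolic element $h_i \in \Gamma$ corresponding to $\alpha_i$ together with an element $w_i \in \Gamma$ corresponding to a curve $\delta_i$ crossing $\alpha_i$ once (or a controlled number of times), and to use a trace identity of the form $\mathrm{tr}(h_i w_i) = \mathrm{tr}(w_i)\cosh(\ell_i/2) + (\text{something})\sinh(\ell_i/2)\cdot$(twist-dependent factor), or more cleanly to use that the ratio of two such traces isolates $\cosh(\tau_i)$ or $\cosh^2(\tau_i)$. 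The point is that all these traces, by Theorem \ref{prop:lth-in-sh}, lie in $\mathbb{Q}[e^{\pm s_1/2},\dots,e^{\pm s_K/2}]$, and $\ell_i$ itself satisfies $2\cosh(\ell_i/2) \in \mathbb{Q}[e^{\pm s_j/2}]$, so solving the trace identity for $\cosh^2(\tau_i)$ expresses it as a rational function of the $e^{\pm s_j/2}$, i.e. an element of $\mathbb{Q}(e^{\pm s_1/2},\dots,e^{\pm s_K/2})$.

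The key steps, in order, are: (i) fix a pants curve $\alpha_i$ and a transverse curve $\delta_i$ that intersects $\alpha_i$ minimally, and record their homotopy classes as words in the fundamental group so that the associated hyperbolic elements $h_i, w_i$ can be written as explicit products of the basic shear matrices $H_{\varepsilon}(s)P_{\varepsilon}$ exactly as in the proof of Theorem \ref{prop:lth-in-sh}; (ii) write down the Fenchel-Nielsen twist-length identity relating $\tau_i$ to $\mathrm{tr}(h_i)$, $\mathrm{tr}(w_i)$, $\mathrm{tr}(h_i w_i)$ (and possibly $\mathrm{tr}(h_i w_i^{-1})$) — this is classical, e.g. in Okai or in Buser's book; (iii) solve that identity algebraically for $\cosh^2(\tau_i)$, which will come out as a rational expression in $\cosh(\ell_i/2)$, $\sinh(\ell_i/2)$ and the three traces, and then square or rationalize to eliminate $\sinh(\ell_i/2)$ using $\sinh^2 = \cosh^2 - 1$; (iv) invoke Theorem \ref{prop:lth-in-sh} to conclude each trace and $\cosh(\ell_i/2)$ is a $\mathbb{Q}$-polynomial in the $e^{\pm s_j/2}$, hence $\cosh^2(\tau_i) \in \mathbb{Q}(e^{\pm s_1/2},\dots,e^{\pm s_K/2})$.

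For the ``furthermore'' part — that the denominator is bounded away from $0$ on each mapping class group orbit — the approach is to identify the denominator explicitly as a polynomial expression whose vanishing would force a degeneration. The denominator produced in step (iii) will be (a power of) something like $\mathrm{tr}(h_i)^2 - 4 = 4\sinh^2(\ell_i/2)$ times traces of the auxiliary elements, i.e. quantities of the form $2\sinh(\ell_{i}/2)$ or $2\sinh(\ell_{\delta_i}/2)$ and similar, all of which are $\geqslant c > 0$ because on a fixed $\mathrm{Mod}_{g,n}$-orbit $\mathrm{Mod}_{g,n}\cdot X$ there is a uniform positive lower bound (a Margulis-type bound, or simply the fact that on a single orbit every length function takes values in a set bounded below by the systole of $X$) on the hyperbolic length of any essential closed curve. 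Since the word types of $h_i$, $w_i$, $h_i w_i$ are fixed by the topology, their lengths on any $Z \in \mathrm{Mod}_{g,n}\cdot X$ equal the lengths of corresponding curves on $X$ itself, hence are uniformly bounded below, and so is the denominator.

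I expect the main obstacle to be step (ii)–(iii): pinning down the exact twist-length trace identity in a form that cleanly isolates $\cosh^2(\tau_i)$ as a \emph{rational} (not merely algebraic) function, and in particular handling the $\sinh(\ell_i/2)$ terms so that squaring genuinely removes all square roots without introducing spurious sign ambiguities. A secondary subtlety is making sure the transverse curve $\delta_i$ can be chosen so that $w_i$ and $h_i w_i$ are themselves hyperbolic with traces covered by Theorem \ref{prop:lth-in-sh} (the theorem is stated for non-degenerate closed curves, so one must check $\delta_i$ and $\alpha_i \cdot \delta_i$ are essential and non-peripheral, which is automatic for a suitable choice). Once the identity is in hand, the rationality and the denominator bound follow routinely from the already-established Theorem \ref{prop:lth-in-sh} and properness of length functions on orbits.
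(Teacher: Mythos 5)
Your proposal is correct and takes essentially the same route as the paper: the paper likewise expresses $\cosh\tau_i$ (hence $\cosh^2\tau_i$) through classical twist--length identities for a transverse curve in the $(1,1)$- and $(0,4)$-type configurations (Buser-style formulas rather than your $\mathrm{SL}_2$ trace identities, but these are equivalent in content), and then substitutes Theorem \ref{prop:lth-in-sh} to land in $\mathbb{Q}\big(e^{\pm s_1/2},\cdots,e^{\pm s_K/2}\big)$. The denominator bound is argued the same way in the paper, via the uniform positive lower bound on the lengths of the topologically fixed curves appearing in the denominator along a mapping class group orbit.
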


\begin{proof}
It is known that the twist $\tau$ along each pants curve $\gamma$
can be described in terms of the length functions of some closed curves. See \cite[Chapter 3]{Bu} and \cite{Mir16}.
In the following, to simplify notation, the length of curve is denoted by the same notation as the curve itself.

The following formulae will be used later:
\[\cosh 2x = 2\cosh^2 x -1 = 2\sinh^2 x +1,\ \sinh 2x = 2\sinh x \cosh x\ .\]

There are two types of the curves:

\medskip

(1) The curve $\gamma$ is contained in a (1,1)-type subsurface. See Figure \ref{fig:tw-in-len}(a).

\medskip

We have:
\[\begin{cases}
\ \cosh d \sinh^2(\gamma/2) = \cosh(\delta/2)+\cosh^2(\gamma/2) \\
\ \cosh (\mu/2) = \cosh (d/2) \cosh(\tau/2)\\
\end{cases}.\]
Here $\delta$ is the other boundary curve of the pants, $d$ is a segment perpendicular to $\gamma$, and $\mu$ is a simple closed curve. It follows that
\[ 1 + \cosh\tau =
\frac{(\cosh\mu +1)(\cosh \alpha-1)}
{\cosh(\delta/2)+\cosh\alpha}\ .\]

\medskip

(2) The curve $\gamma$ is contained in a (0,4)-type subsurface. See Figure \ref{fig:tw-in-len}(b).

We have:
\[\begin{cases}
\ \cosh\frac{\mu}{2} = \cosh d \sinh\frac{{\beta}}{2} \sinh\frac{{\beta}'}{2} - \cosh\frac{{\beta}}{2} \cosh\frac{{\beta}'}{2} \\
\ \cosh d = \cosh{\tau} \sinh{h} \sinh{h'} + \cosh{h}\cosh{h'} \\
\ \cosh{h}\ \sinh\frac\gamma{2} \sinh\frac{\beta}{2}\ = \cosh\frac{\delta}{2} + \cosh\frac\gamma{2} \cosh\frac{{\beta}}{2} \\
\ \cosh{h'} \sinh\frac\gamma{2} \sinh\frac{{\beta}'}{2} = \cosh\frac{{\delta}'}{2} + \cosh\frac\gamma{2} \cosh\frac{{\beta}'}{2} \\
\end{cases}. \]
Here $\beta,\delta,\beta',\delta'$ are the other boundary curves of the pants,
and $d,h,h'$ are the common perpendicular segments,  of certain topological type, from $\beta$ to $\beta'$, $\beta$ to $\gamma$ and $\beta'$ to $\gamma$, respectively.
$\mu$ is a simple closed curve.
We have
\[ \cosh\tau = \frac {\cosh d -\cosh h \cosh h'} {\sinh h \sinh h'}\ .\]

\begin{figure}[!t]
 \subfigure[]
  \centering
  \def\svgwidth{0.5\columnwidth}
\begingroup%
  \makeatletter%
  \providecommand\color[2][]{%
    \errmessage{(Inkscape) Color is used for the text in Inkscape, but the package 'color.sty' is not loaded}%
    \renewcommand\color[2][]{}%
  }%
  \providecommand\transparent[1]{%
    \errmessage{(Inkscape) Transparency is used (non-zero) for the text in Inkscape, but the package 'transparent.sty' is not loaded}%
    \renewcommand\transparent[1]{}%
  }%
  \providecommand\rotatebox[2]{#2}%
  \newcommand*\fsize{\dimexpr\f@size pt\relax}%
  \newcommand*\lineheight[1]{\fontsize{\fsize}{#1\fsize}\selectfont}%
  \ifx\svgwidth\undefined%
    \setlength{\unitlength}{209.7766092bp}%
    \ifx\svgscale\undefined%
      \relax%
    \else%
      \setlength{\unitlength}{\unitlength * \real{\svgscale}}%
    \fi%
  \else%
    \setlength{\unitlength}{\svgwidth}%
  \fi%
  \global\let\svgwidth\undefined%
  \global\let\svgscale\undefined%
  \makeatother%
  \begin{picture}(1,0.4462226)%
    \lineheight{1}%
    \setlength\tabcolsep{0pt}%
    \put(0,0){\includegraphics[width=\unitlength,page=1]{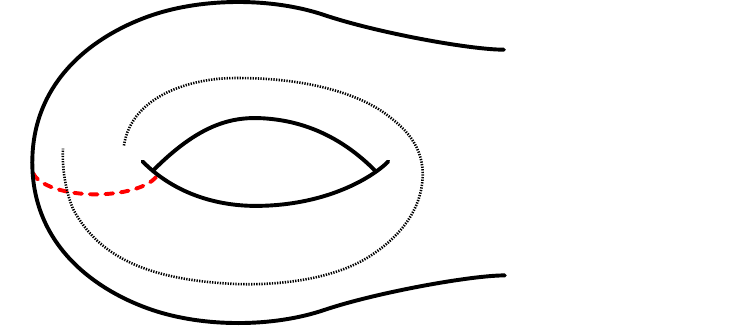}}%
    \put(-0.00161072,0.20867933){\makebox(0,0)[lt]{\lineheight{1.04999995}\smash{\begin{tabular}[t]{l}{\color{red}{$\gamma$}}\end{tabular}}}}%
    \put(0.32428142,0.39306845){\makebox(0,0)[lt]{\lineheight{1.04999995}\smash{\begin{tabular}[t]{l}{\color{blue}{$\mu$}}\end{tabular}}}}%
    \put(0.68788437,0.20216608){\makebox(0,0)[lt]{\lineheight{1.04999995}\smash{\begin{tabular}[t]{l}{\color{OliveGreen}{$\delta$}}\end{tabular}}}}%
    \put(0.54105328,0.089253){\makebox(0,0)[lt]{\lineheight{1.04999995}\smash{\begin{tabular}[t]{l}$d$\end{tabular}}}}%
    \put(0,0){\includegraphics[width=\unitlength,page=2]{S11n.pdf}}%
  \end{picture}%
\endgroup%

 \hspace{10mm}
 \subfigure[]
  \centering
  \def\svgwidth{0.45\columnwidth}
  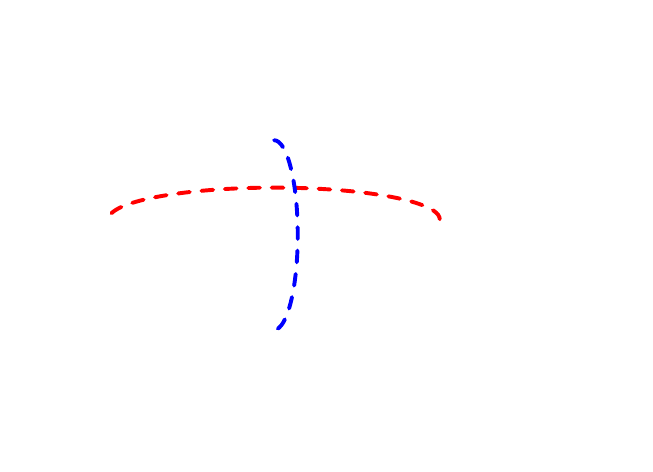
 \caption{Local closed curves describe the twist.}
 \label{fig:tw-in-len}
\end{figure}

\medskip

We have shown that $\cosh^2 \tau$ is a rational function of desired form. Note that in a certain mapping class group orbit, the length of pants curves have a lower bound.
In both of the above two cases, the denominator is a function of length of pants curves, which is bounded from below.

\end{proof}

\begin{remark}
The results and proofs in this section also hold if $\Delta$ is a maximal lamination with only isolated and closed leaves.

According to \cite{BBFS13}, for each closed leaf, there is a liner equation about its length and the shear.
And the shear on closed leaves are basically the same as twist.
These facts ensure the APL property in both directions.
\end{remark}

\subsection{The norm of shearing coordinates is bounding.}
\begin{prop}\label{prop:bound}
The norm $\|Sh_\Delta\|$ is bounding \wrt the \FN.
\end{prop}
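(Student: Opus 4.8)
The plan is to check the reformulated bounding condition of Proposition~\ref{prop:fillike}: for a fixed $Y\in\Tgn$ and each index $i$, I would show that $\big(\ell_{\alpha_i}(Z)+|\tau_{\alpha_i}(Z)|\big)\big/\|Sh_\Delta(Z)\|$ stays bounded as $Z$ ranges over the orbit $\mathrm{Mod}_{g,n}\cdot Y$. The numerator is handled term by term using the algebraic formulas of Theorem~\ref{prop:lth-in-sh} and Corollary~\ref{prop:tw-in-sh}. The underlying mechanism is that a polynomial of fixed degree in the variables $e^{\pm s_j/2}$ (here $(s_1,\dots,s_K)=Sh_\Delta(Z)$) grows at most like $\exp\!\big(C\sum_j|s_j|\big)$, so its logarithm is $O\big(\sum_j|s_j|\big)=O\big(\sqrt K\,\|Sh_\Delta(Z)\|\big)$; and $\ell_{\alpha_i}$, $|\tau_{\alpha_i}|$ are, up to additive constants, exactly such logarithms.

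For the length: by Theorem~\ref{prop:lth-in-sh}, $\cosh(\ell_{\alpha_i}(Z)/2)=Q_i\big(e^{\pm s_1/2},\dots,e^{\pm s_K/2}\big)$ for a fixed polynomial $Q_i$ with rational coefficients, depending only on the topology of $\Delta$ and $\alpha_i$. Let $\widetilde Q_i$ be $Q_i$ with every coefficient replaced by its absolute value; then $e^{\ell_{\alpha_i}(Z)/2}\le 2\cosh(\ell_{\alpha_i}(Z)/2)\le\widetilde Q_i\big(e^{|s_1|/2},\dots,e^{|s_K|/2}\big)\le M\exp\!\big(\tfrac{d}{2}\textstyle\sum_j|s_j|\big)$, where $M$ and $d$ (a coefficient bound and the degree) depend only on the topological data. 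Taking logarithms gives $\ell_{\alpha_i}(Z)\le A\|Sh_\Delta(Z)\|+B$ with $A,B$ topological.

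For the twist: by Corollary~\ref{prop:tw-in-sh}, $\cosh^2(\tau_{\alpha_i}(Z))$ is a quotient $P_i/W_i$ of polynomials in the $e^{\pm s_j/2}$ whose denominator $W_i$ depends only on pants-curve lengths, hence is bounded below by a positive constant $\varepsilon_Y$ on the orbit (every simple closed curve of a fixed topological type has length at least $\mathrm{sys}(Y)$ on $Y$, and $\ell_{\alpha_j}(\phi\cdot Y)=\ell_{\phi^{-1}\cdot\alpha_j}(Y)$). Then $e^{|\tau_{\alpha_i}(Z)|}\le 2\cosh(\tau_{\alpha_i}(Z))=2\sqrt{P_i/W_i}\le 2\varepsilon_Y^{-1/2}\sqrt{|P_i|}$, and bounding the polynomial $|P_i|$ as before gives $|\tau_{\alpha_i}(Z)|\le A'\|Sh_\Delta(Z)\|+B'(Y)$. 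Combining, $\ell_{\alpha_i}(Z)+|\tau_{\alpha_i}(Z)|\le (A+A')\|Sh_\Delta(Z)\|+(B+B'(Y))$ on the orbit. Since $Sh_\Delta$ is a homeomorphism onto a closed linear subspace of $\mathbb{R}^K$, $\|Sh_\Delta\|$ is proper, and since $\mathrm{Mod}_{g,n}\cdot Y$ is discrete it meets each sublevel set of $\|Sh_\Delta\|$ in finitely many points; hence $\|Sh_\Delta\|$ has a positive lower bound along the orbit (the at most one orbit point where $Sh_\Delta=0$, if present, requiring only a separate remark). Dividing by $\|Sh_\Delta(Z)\|$ then finishes the verification via Proposition~\ref{prop:fillike}.

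The step I expect to be the main obstacle is the twist estimate: because $\cosh^2(\tau_{\alpha_i})$ is only a \emph{rational} function of the shear exponentials, one must control its denominator, and this control is available only orbit by orbit, using the systolic lower bound of the chosen surface $Y$ --- which is precisely why the bounding condition is stated per orbit rather than globally on $\Tgn$. The length half is a uniform topological estimate once Theorem~\ref{prop:lth-in-sh} is in hand, and the remaining ingredients (the coefficient-modulus bound, the properness of $\|Sh_\Delta\|$) are routine.
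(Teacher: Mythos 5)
Your proposal is correct and takes essentially the same route as the paper's proof: bound $\cosh(\ell_i/2)$ and $\cosh^2(\tau_i)$ through the algebraic expressions in $e^{\pm s_j/2}$ from Theorem \ref{prop:lth-in-sh} and Corollary \ref{prop:tw-in-sh} (using the orbitwise lower bound on the denominator), deduce that $\ell_i$ and $|\tau_i|$ are bounded by linear functions of $\|Sh_\Delta\|$ on each mapping class group orbit, and conclude via Proposition \ref{prop:fillike}. You are in fact somewhat more explicit than the paper, which leaves implicit the coefficient/degree estimate, the passage from $\sum_j|s_j|$ to $\sqrt{K}\,\|Sh_\Delta\|$, and the handling of the additive constants via the lower bound of $\|Sh_\Delta\|$ along the orbit.
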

\begin{proof}
Let $\|\vec{s} \|$ be the Euclidean norm of a vector $\vec{s}$. 
By Theorem \ref{prop:lth-in-sh}, for each $i$ there are positive rational numbers $M_i,A_i,N_i,B_i$ such that
\[\cosh(\ell_i/2)\leqslant M_i e^{A_i \| \vec{s} \|},\ \cosh^2(\tau_i)\leqslant N_i e^{B_i \| \vec{s} \|}\]
on a particular mapping class group orbit.
Thus  $\ell_i,|\tau_i|$ are bounded by linear functions of $\| \vec{s} \|$, with positive leading coefficients.
By \Prop\ \ref{prop:fillike}, $\|Sh_\Delta\|$ is bounding \wrt the \FN.
\end{proof}

\section{Weil-Petersson volume under \Sh}\label{sec:volume}
\indent

The last task is to show that under the \Sh,
the Weil-Petersson volume form is the Euclidean volume form, up to a scaling constant.
To see this, we use the cataclysms coordinates of
Thurston \cite{Thu86}.

\subsection{Weil-Petersson volume form}
\indent

A \emph{\mf}\ on a surface is a foliation with singularities together with a transverse measure, which is invariant under homotopic moving along the leaves of foliation.
Two \mf s are \emph{equivalent} if one may be
transformed to the other by isotopies moves and Whitehead moves, which allow to break down or combine the singularities.
Usually, a \mf\ refers to an equivalence class.
The space of all equivalence classes of \mf s on a topological surface is denoted by $\mathcal{MF}$.

For surfaces with punctures, we shall only consider foliations with compact support.
This means that the support of the transverse measure is bounded away from some neighbourhood of the punctures.
Let $\mathcal{MF}_0$ be the space of all equivalence classes of \cptspp\ \mf s on $S_{g,n}$.

The space $\mathcal{MF}_0$ has a piecewise linear structure.
And it admits a 2-form called the Thurston symplecitc form.
The symplectic form induces a natural volume form.
We refer to \cite{FLP} for more details on \mf s, and to \cite{PeH} for measured laminations and related topics.
\medskip

There is a close relation between the Thurston symplectic forms and the Weil-Petersson symplectic form, via the \Sh\ \cite{SoB01}.
In the special case of ideal triangulation, the relation is rather simple \cite{PaP93}. Let us describe in the following.

Given an ideal triangulation $\Delta$ and a hyperbolic surface $X\in\Tgn$, there is a foliation on $X$ whose leaves are segments of horocycles centred at the ideal vertices.
The complement of its support in each ideal triangle is a small triangle bounded by three horocycle segments of length $1$,
meeting tangentially at the tangent points of the inscribed circles.
See Figure \ref{fig:horo}.

\begin{figure*}[t]
 \centering
 \subfigure
  {\includegraphics[scale=0.55]{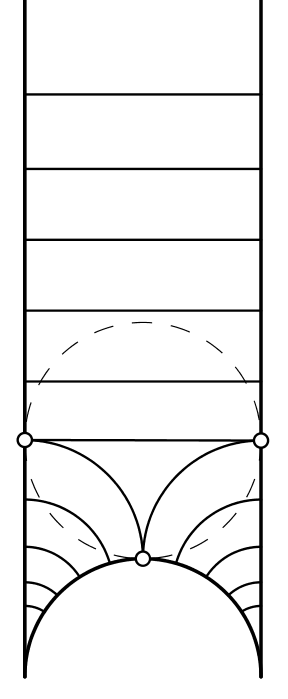}}
 \hspace{25mm}
 \subfigure
  {\includegraphics[scale=0.55]{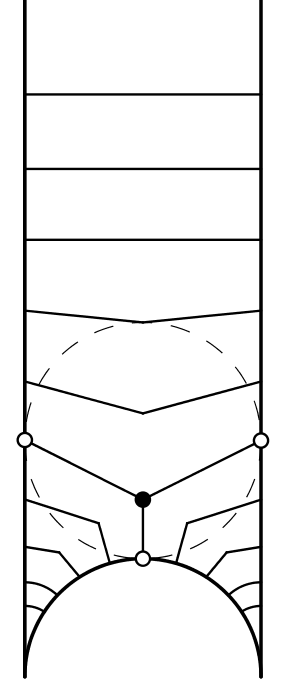}}
 \caption{Horocycle foliation in a single ideal triangle.}
 \label{fig:horo}
\end{figure*}

We can endow the horocycle foliation with a transverse measure such that the measure of any geodesic arc $I$ contained in $\Delta$ is equal to its hyperbolic length.
By collapsing each small unfoliated triangle to a 3-pronged singularity, we obtain a measured foliation in $\mathcal{MF}$.

This foliation is not \cptspp. However, the completeness of the hyperbolic metric guarantees that leaves near the cusped region must be closed.
Thus we are able to obtain a \cptspp\ foliation
by deleting all of these closed leaves paralleled to the punctures.
Denote the resulted foliation by $\mathcal{F}_\Delta(X)$. See Figure \ref{fig:HoroFoli} for an example of this process on $S_{1,2}$.

\begin{figure*}[!t]
 \centering
 \subfigure[An ideal triangulation of $S_{1,2}$ with the shear data.]
  {\includegraphics[scale=0.35]{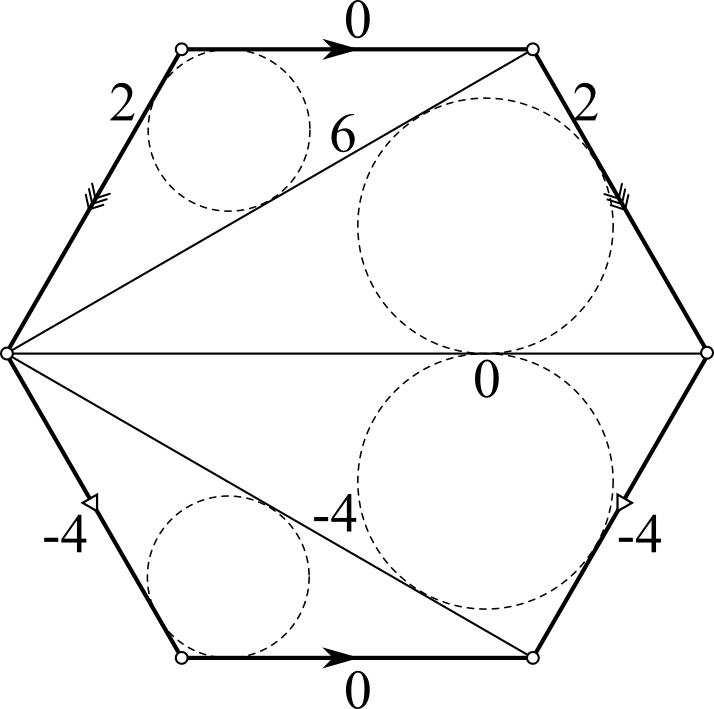}}
 \hspace{15mm}
 \subfigure[The entire horocycle foliation, with colored singular leaves.]
  {\includegraphics[scale=0.35]{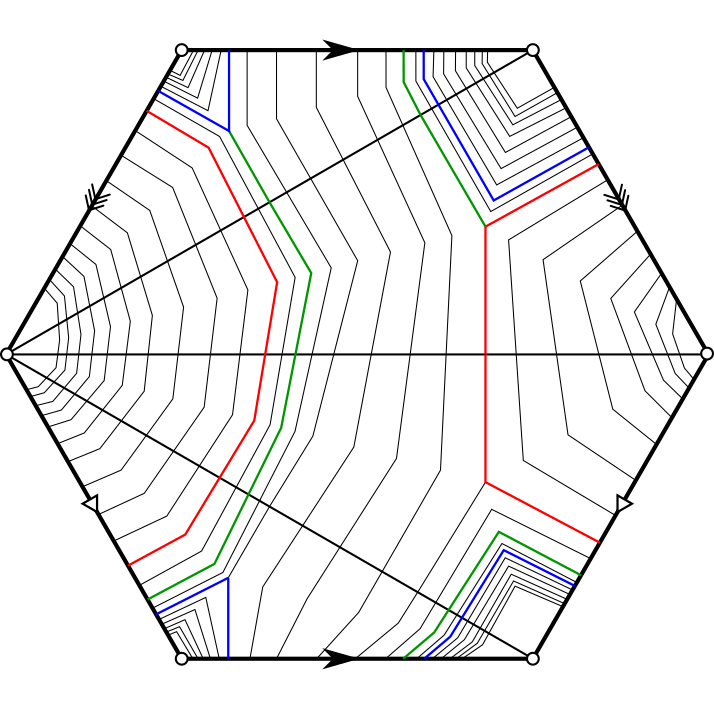}}
 \subfigure[The \mf\ $\mathcal{F}_\Delta(X)$. 
 The boundary must consist of closed singular leaves.]
  {\includegraphics[scale=0.35]{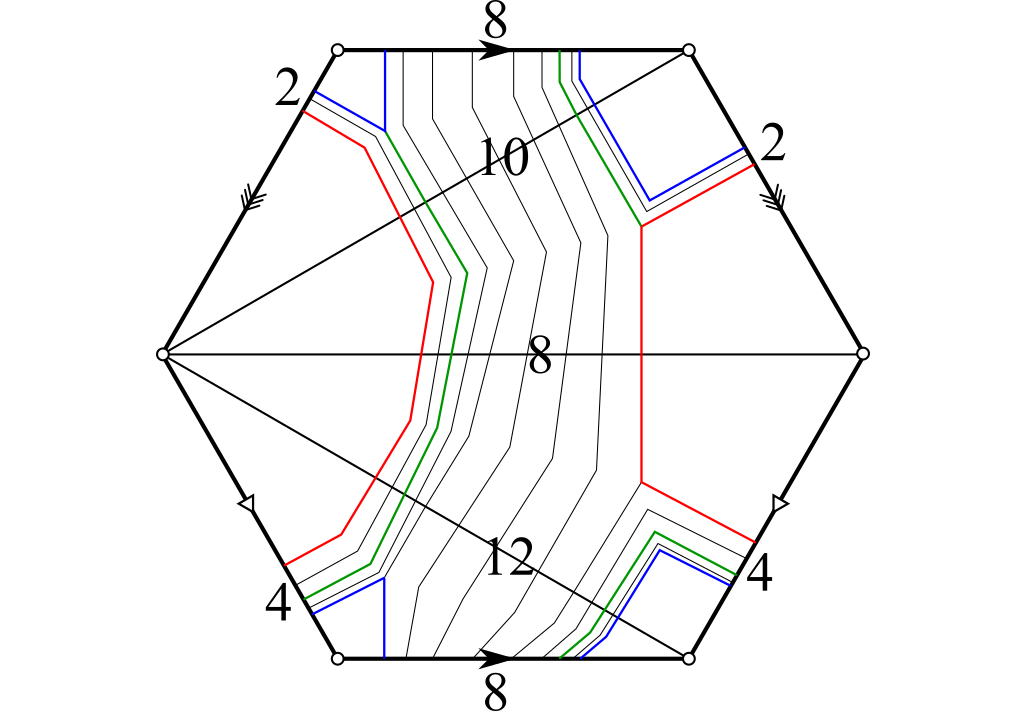}}
 \caption{From the shear data to a \cptspp\ \mf.}
 \label{fig:HoroFoli}
\end{figure*}

By \cite[\Prop\ 9.4]{Thu86} and \cite{PaP93}, the map
\begin{align*}
\mathcal{F}_\Delta: \Tgn &\longrightarrow \mathcal{MF}_0\\
X &\mapsto \mathcal{F}_\Delta(X)
\end{align*}
is a \homeo.

Denote the transverse measure of $\mathcal{F}_\Delta(X)$ on each edge $c\in\Delta$ by {$m_c(X)$}.
We can embed $\mathcal{MF}_0$  into $\mathbb{R}_{\geq0}^{6g-6+3n}$ as an Euclidean cone:
\begin{align*}
 \mathcal{MF}_0 &\longrightarrow \mathbb{R}_{\geq0}^{6g-6+3n}\\
\mathcal{F}_\Delta(X) &\mapsto \left(m_{c_i}(X)\right)_{i=1}^{N}
\end{align*}
We use the above map to define the symplectic form on $\mathcal{MF}_0$.

\begin{prop}{\rm\cite[Corollary 4.2]{PaP93}}
The \homeo\ $\mathcal{F}_\Delta$ pulls back the Thurston's symplectic form on $\mathcal{MF}_0$ to the Weil-Petersson form on $\Tgn$.
\end{prop}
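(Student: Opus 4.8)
The plan is to trivialize both symplectic forms in the linear coordinate systems attached to $\Delta$ and then match them under the coordinate change induced by $\mathcal{F}_\Delta$. On the \Teich side we use the \Sh\ $(s_c)_{c\in\Delta}$, whose image is the linear subspace $\mathcal{C}_\Delta$ of codimension $n$ in $\mathbb{R}^{6g-6+3n}$ (one completeness relation per puncture). On the measured--foliation side we use the transverse widths $(m_c)_{c\in\Delta}$ of the horocyclic foliation, whose image is a full-dimensional cone inside a codimension-$n$ linear subspace of $\mathbb{R}^{6g-6+3n}$ (the switch relations near the punctures). Since in both cases the coordinates are \emph{linear}, each of the two forms has constant coefficients, so it suffices to identify those coefficient matrices and track how they transform under $\mathcal{F}_\Delta$.

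First I would record the combinatorial expression for the \WP form in \Sh: up to a universal constant,
\[
\omega_{wp}\ =\ \sum_{c,c'\in\Delta}\varepsilon_{cc'}\;ds_c\wedge ds_{c'},
\]
where $\varepsilon_{cc'}$ is the signed count of corners of $\Delta$ at which $c'$ follows $c$ in the counterclockwise cyclic order; equivalently, $\omega_{wp}$ is a constant times the sum over the triangles of $\Delta$ of the cyclic antisymmetrization $ds_a\wedge ds_b+ds_b\wedge ds_c+ds_c\wedge ds_a$ of their edge differentials. This is due to Penner and can also be read off from the shear-coordinate descriptions of Bonahon and of Fock--Goncharov; a self-contained derivation can be obtained from Wolpert's formula $\omega_{wp}=\sum_i d\ell_i\wedge d\tau_i$ by changing coordinates with \Thm~\ref{prop:lth-in-sh}, or from the Goldman symplectic form on the $\mathrm{PSL}(2,\mathbb{R})$-character variety. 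On the other side, Thurston's symplectic form on $\mathcal{MF}_0$, which the paper defines through the embedding $\mathcal{F}_\Delta(X)\mapsto(m_c(X))_c$, has exactly the same combinatorial shape $\omega_{Th}=\sum_{c,c'}\varepsilon_{cc'}\,dm_c\wedge dm_{c'}$, with the \emph{same} matrix $\varepsilon$, because the train track dual to $\Delta$ carries the combinatorics of $\Delta$.

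Thus the Proposition is equivalent to the statement that $\mathcal{F}_\Delta$ is a symplectomorphism for this fixed matrix $\varepsilon$: writing $J=(\partial m_c/\partial s_{c'})$ for its Jacobian in these coordinates, one must check that $d\mathcal{F}_\Delta$ carries the tangent space of $\mathcal{C}_\Delta$ onto the switch subspace and that $J^{\top}\varepsilon J=\varepsilon$ there, the normalization constant being pinned down afterward by testing on a single surface such as $S_{1,1}$. To obtain $J$ I would make $\mathcal{F}_\Delta$ explicit triangle by triangle. In each ideal triangle of $\mathbb{H}/\Gamma$ the central unfoliated region and the three horocyclic wings are determined by the tangent points of the inscribed circle on the three sides; by the gluing description those tangent points, and hence the hyperbolic lengths of the three horocyclic arcs bounding the central triangle, are explicit monomials in $e^{\pm s_c/2}$ in the shears on the sides — indeed the matrices $P_\varepsilon,\ H_\varepsilon(s)$ from the proof of \Thm~\ref{prop:lth-in-sh} compute precisely these positions. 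Then $m_c$ equals the sum of the two horocyclic-arc lengths on the two sides of $c$, minus the transverse measure of the cusp-parallel closed leaves that are deleted to make the foliation \cptspp. Differentiating yields $J$, and $J^{\top}\varepsilon J=\varepsilon$ is verified locally: one contribution from each triangle for the bulk terms and one from each puncture for the correction terms, reducing the whole statement to a finite identity of $2$-forms.

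I expect the bookkeeping at the punctures to be the main obstacle: one must show that the linearized completeness relations on the shears correspond exactly to the linearized switch relations on the widths, and that the closed leaves discarded near each cusp account \emph{precisely} for the discrepancy between the (formally infinite) edge widths of the full horocyclic foliation and the finite $m_c$, all compatibly with both $\varepsilon$-forms. A cleaner way to organize the argument is to bypass the explicit Jacobian: identify $T_X\Tgn$ and $T_{\mathcal{F}_\Delta(X)}\mathcal{MF}_0$ with the spaces of $\mathbb{R}$-valued transverse cocycles for $\Delta$ and for its dual train track, observe that $d\mathcal{F}_\Delta$ intertwines these identifications, and invoke the descriptions of $\omega_{wp}$ (Bonahon) and of $\omega_{Th}$ (Thurston) as one and the same algebraic intersection pairing of transverse cocycles — the residual content being again the per-triangle and per-puncture compatibility computed above.
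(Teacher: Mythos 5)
You should first be aware that the paper does not prove this proposition at all: it is imported verbatim from \cite[Corollary 4.2]{PaP93}, whose proof goes through Penner's decorated Teichm\"uller theory ($\lambda$-length coordinates) rather than anything developed in this paper. Judged as an independent proof sketch, your proposal has a circularity problem at its core. Your step (1) --- the constant-coefficient ``corner'' formula for $\omega_{wp}$ in the shear variables --- is not an auxiliary fact: together with the elementary linear relation $s_e=\frac12(m_a+m_c-m_b-m_d)$ of Proposition~\ref{eq:ms-to-sh} and the standard corner expression for Thurston's form in the weights $(m_c)$, it \emph{is} the statement to be proved; it is precisely the Papadopoulos--Penner/S\"ozen--Bonahon theorem \cite{PaP93,SoB01} specialized to ideal triangulations. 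Citing Penner, Fock--Goncharov or Bonahon for it turns your argument into a re-citation of a result of the same depth as the Proposition, which is no advance on the paper's own citation; and the alternative ``self-contained derivation from Wolpert's formula by changing coordinates with Theorem~\ref{prop:lth-in-sh}'' is not carried out and is not routine: that theorem controls only the length functions $\cosh(\ell_\gamma/2)$, says nothing about the twist differentials $d\tau_i$, and converting $\sum_i d\ell_i\wedge d\tau_i$ into shear coordinates is exactly the analytic content that \cite{PaP93} supply by other means.

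The portion you do propose to compute is also misdescribed and left unexecuted. The three horocyclic arcs bounding the central unfoliated region of an ideal triangle have length $1$ independently of the shears (this is how the foliation is normalized in the paper), so they are not ``monomials in $e^{\pm s_c/2}$''; what the shears control are the offsets of the tangent points along the edges and the depth of the cusp-parallel annuli that are deleted, and these enter through min-type expressions, so the map $(s_c)\mapsto(m_c)$ is only piecewise linear and your Jacobian $J$ is only piecewise constant. Hence the verification $J^{\top}\varepsilon J=\varepsilon$, the matching of the tangent space of $\mathcal{C}_\Delta$ with the switch subspace, and the puncture bookkeeping that you yourself flag as ``the main obstacle'' would all have to be done cone by cone --- none of this is actually checked. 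The efficient route inside this framework is the reverse direction: use the globally linear formula of Proposition~\ref{eq:ms-to-sh} for $Sh_\Delta\circ\mathcal{F}_\Delta^{-1}$, pull the corner form in the $s$-variables back to the $m$-variables, and compare with Thurston's form on the tangent space of the image of $\mathcal{MF}_0$, a finite linear-algebra identity; note also that the Proposition asserts equality on the nose, whereas your formula carries an undetermined universal constant, and checking it on $S_{1,1}$ alone does not by itself show the constant is independent of $(g,n)$. As written, the proposal is a plausible reduction plan resting on citations equivalent to the statement, not a proof.
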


Recall that $\mathcal{C}_\Delta$ is the image of the \Sh,
 as a subspace of $\mathbb{R}^{6g-6+3n}$.
By the above construction, we can consider the composition map $Sh_\Delta\circ\mathcal{F}_\Delta^{-1}$ as a map between Euclidean spaces.
The following should be equivalent to \cite[\Prop\ 9.1]{Thu86}.

\begin{prop}\label{eq:ms-to-sh}
The coordinate transformation $Sh_\Delta\circ\mathcal{F}_\Delta^{-1}$ from $\mathcal{MF}_0$ to $\mathcal{C}_\Delta$ is determined by
\[ s_e(X)=\frac12 \bigg( m_a(X)+m_c(X)-m_b(X)-m_d(X) \bigg) \]
for each edge $e\in\Delta$.
Here $a,b,e$ and $c,d,e$ are the edges of two adjacent ideal triangles, with $a,b,c,d$ in counterclockwise order.
\end{prop}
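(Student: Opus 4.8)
The plan is to compute both quantities—the shear $s_e(X)$ on an edge $e$ and the horocyclic transverse measures $m_a(X),\dots,m_d(X)$ on the surrounding edges—directly in the upper half-plane model, using a convenient normalization of the two adjacent lifted triangles, and then simply compare. First I would lift the configuration: let $\widetilde T_1=[x_1,x_2,x_3]$ and $\widetilde T_2=[x_1,x_3,x_4]$ be adjacent ideal triangles in $\mathbb{H}$ with common edge $e=[x_1,x_3]$ and $x_1,x_2,x_3,x_4$ in counterclockwise order, where $a=[x_2,x_3]$, $b=[x_1,x_2]$ are the other two edges of $\widetilde T_1$ and $c=[x_3,x_4]$, $d=[x_1,x_4]$ are the other two edges of $\widetilde T_2$ (matching the counterclockwise convention in the statement). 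By applying a Möbius transformation I may normalize, say, $x_1=0$, $x_3=\infty$, $x_2=1$, so that $e$ is the imaginary axis; then $x_4=-t$ for some $t>0$ since $x_4$ lies on the other side. With this normalization Proposition \ref{prop:sh-in-cr} gives $s_e = \ln t$ after simplifying the cross-ratio, so the task reduces to expressing each $m_c$ in terms of $t$.

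Next I would pin down what the transverse measures $m_a,\dots,m_d$ actually are. The key geometric input is the description recalled just before the statement: in each ideal triangle the unfoliated region is the small triangle bounded by three horocyclic arcs of length $1$ meeting at the tangency points of the inscribed circle, and the transverse measure of a full edge of $\Delta$ equals the hyperbolic length of the geodesic arc it subtends—but that edge is infinitely long, so the relevant finite quantity is a signed \emph{difference of horocyclic coordinates} at the two tangency points on that edge coming from the two adjacent triangles. Concretely, for an edge $c$ shared by triangles with third vertices $p$ and $q$ (on opposite sides), the inscribed circle of each adjacent triangle touches $c$ at a point, and $m_c$ is the signed hyperbolic distance along $c$ between those two tangency points—exactly the same kind of quantity as the shear, but the sign conventions differ so that the signed distances around a single triangle do \emph{not} cancel. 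I would compute, for each of the four edges $a,b,c,d$, the positions of the two tangency points: one tangency point comes from the ``inside'' triangle ($\widetilde T_1$ or $\widetilde T_2$) and is computable from the normalized vertex coordinates via the standard formula for the incircle tangency point of an ideal triangle (the tangency point on the edge $[y,z]$ of $[y,z,w]$ is the point at Euclidean height related to $|y-z|$, $|y-w|$, $|z-w|$); the other comes from the neighboring triangle across that edge, which is \emph{not} drawn but whose relevant data is fixed once we agree on the horocycle normalization. The cleanest route is to observe that each $m_c(X)$ is a globally well-defined function on $\Tgn$, so it suffices to verify the identity $s_e = \tfrac12(m_a+m_c-m_b-m_d)$ after summing/differencing the \emph{local contributions from $\widetilde T_1$ and $\widetilde T_2$ only}, because the contributions from the far-side neighbors of $a,b,c,d$ appear in the formula for $s_e$ of those edges, not of $e$, and cancel when one checks consistency globally. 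In fact the sharper and more self-contained statement to prove is the purely local one: if for a single ideal triangle $[y,z,w]$ we let $h_{yz},h_{zw},h_{wy}$ denote the signed horocyclic coordinates of the incircle tangency points (with a fixed orientation convention on each edge), then the shear on $e$ decomposes as a sum of two such local terms from $\widetilde T_1$ and $\widetilde T_2$, and the four $m$'s likewise decompose, and the linear combination $\tfrac12(m_a+m_c-m_b-m_d)$ collapses, term by term, onto those two contributions. I would set this up so that all far-side terms are manifestly absent.

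The concrete computation I expect: with $x_1=0,x_2=1,x_3=\infty$, the incircle of $\widetilde T_1=[0,1,\infty]$ is the circle tangent to the imaginary axis, to the vertical line $\mathrm{Re}=1$, and to the unit semicircle; its tangency points are at known positions ($i$ on the axis $e$, $1+i$ on the line $b$, and the appropriate point on the semicircle $a$). For $\widetilde T_2=[0,\infty,-t]$ the incircle tangency point on $e$ is at height $t$, i.e. at $ti$, giving $s_e=\ln t$ as the signed distance from $i$ to $ti$ along the imaginary axis. Then one reads off the contributions of $\widetilde T_1$ to $m_a$ and $m_b$, and of $\widetilde T_2$ to $m_c$ and $m_d$, each as an explicit function of the vertex coordinates (hence of $t$, and of the normalized far vertices which one carries along symbolically), takes $\tfrac12(m_a+m_c-m_b-m_d)$, and watches the far-vertex terms drop out leaving $\ln t$. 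The main obstacle I anticipate is bookkeeping of \emph{signs and orientations}: the shear uses the convention ``$T_1$ on the left, $T_2$ on the right,'' whereas the $m_c$'s are unsigned transverse measures that become signed only once one orients each edge, and getting the counterclockwise labelling of $a,b,c,d$ to align with these choices so that the $\tfrac12$ and the minus signs come out exactly right is the delicate part. Everything else is a short trigonometric computation with ideal triangles that I would not spell out in full, citing \cite[Prop.~9.1]{Thu86} and \cite{PaP93} for the underlying normalization of horocyclic lengths.
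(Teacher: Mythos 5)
The paper's own proof is a one--line appeal to ``the definition of shear and the construction of the measured foliation'' plus a figure, so a detailed verification like the one you attempt is welcome; unfortunately your argument rests on a misidentification of the quantities $m_a,\dots,m_d$, and this breaks the computation. You declare that for an edge $c$ the measure $m_c$ is ``the signed hyperbolic distance along $c$ between the two incircle tangency points coming from the two adjacent triangles.'' By Definition \ref{def:shear} that signed distance is exactly the shear $s_c$ of the edge $c$ itself, for any sign convention; so the identity you would end up verifying is $s_e=\tfrac12(s_a+s_c-s_b-s_d)$, which is false in general (on $S_{1,1}$ at the symmetric point all shears vanish while the transverse measures $m_c$ of $\mathcal{F}_\Delta(X)$ are strictly positive; more basically, the shears of the five edges of the quadrilateral are not constrained by such a relation). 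The actual $m_c(X)$ is the transverse measure deposited on $c$ by the compactly supported foliation $\mathcal{F}_\Delta(X)$, i.e.\ the hyperbolic length of the sub-arc of $c$ lying in its support: $c$ truncated at each end by the outermost closed horocyclic leaf about the corresponding cusp (the boundary singular leaves left after deleting the leaves parallel to the punctures). In particular $m_c$ depends on these cusp horocycles and not at all on the incircle tangency point of the far-side triangle, so the ``far-vertex terms'' you plan to carry along symbolically are not the terms that need to cancel, and with your definition nothing cancels.

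The missing ingredients that make the correct statement local are two observations you never formulate: (i) in $m_a+m_c-m_b-m_d$ each of the four cusps $x_1,x_2,x_3,x_4$ occurs once with sign $+$ and once with sign $-$, so replacing the distinguished singular-leaf horocycles by arbitrary horocycles about the cusps changes nothing; and (ii) in a single ideal triangle the incircle tangency points on the two edges meeting at a given ideal vertex lie on one horocycle about that vertex (equivalently, are equidistant along their edges from any horocycle about it) --- this is exactly the ``horocyclic arcs of length $1$ meeting at the tangency points'' in the construction. Granting (i) and (ii), one splits each truncated length $m_f$ at the tangency point of the triangle of the quadrilateral containing $f$, the horocycle contributions cancel in pairs by (ii), and the alternating sum collapses to the two signed gaps between the tangency points of $T_1$ and $T_2$ on $e$, giving $2s_e$; your half-plane normalization $x_1=0$, $x_2=1$, $x_3=\infty$, $x_4=-t$ and the computation $s_e=\pm\ln t$ via Proposition \ref{prop:sh-in-cr} is fine for fixing the signs at that final stage. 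As written, however, the proposal proves a different (and false) identity, so the key step --- correctly relating $m_c$ to the construction of $\mathcal{F}_\Delta(X)$ and exhibiting the horocycle cancellation --- is missing.
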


\begin{proof}
The formula follows immediately from the definition of shear and the construction of the \mf. See Figure \ref{fig:mf-in-pair}.
\end{proof}

\begin{figure}[!t]
  \centering
  \def\svgwidth{0.44\columnwidth}
\begingroup%
  \makeatletter%
  \providecommand\color[2][]{%
    \errmessage{(Inkscape) Color is used for the text in Inkscape, but the package 'color.sty' is not loaded}%
    \renewcommand\color[2][]{}%
  }%
  \providecommand\transparent[1]{%
    \errmessage{(Inkscape) Transparency is used (non-zero) for the text in Inkscape, but the package 'transparent.sty' is not loaded}%
    \renewcommand\transparent[1]{}%
  }%
  \providecommand\rotatebox[2]{#2}%
  \newcommand*\fsize{\dimexpr\f@size pt\relax}%
  \newcommand*\lineheight[1]{\fontsize{\fsize}{#1\fsize}\selectfont}%
  \ifx\svgwidth\undefined%
    \setlength{\unitlength}{651.88536253bp}%
    \ifx\svgscale\undefined%
      \relax%
    \else%
      \setlength{\unitlength}{\unitlength * \real{\svgscale}}%
    \fi%
  \else%
    \setlength{\unitlength}{\svgwidth}%
  \fi%
  \global\let\svgwidth\undefined%
  \global\let\svgscale\undefined%
  \makeatother%
  \begin{picture}(1,0.57930709)%
    \lineheight{1}%
    \setlength\tabcolsep{0pt}%
    \put(0,0){\includegraphics[width=\unitlength,page=1]{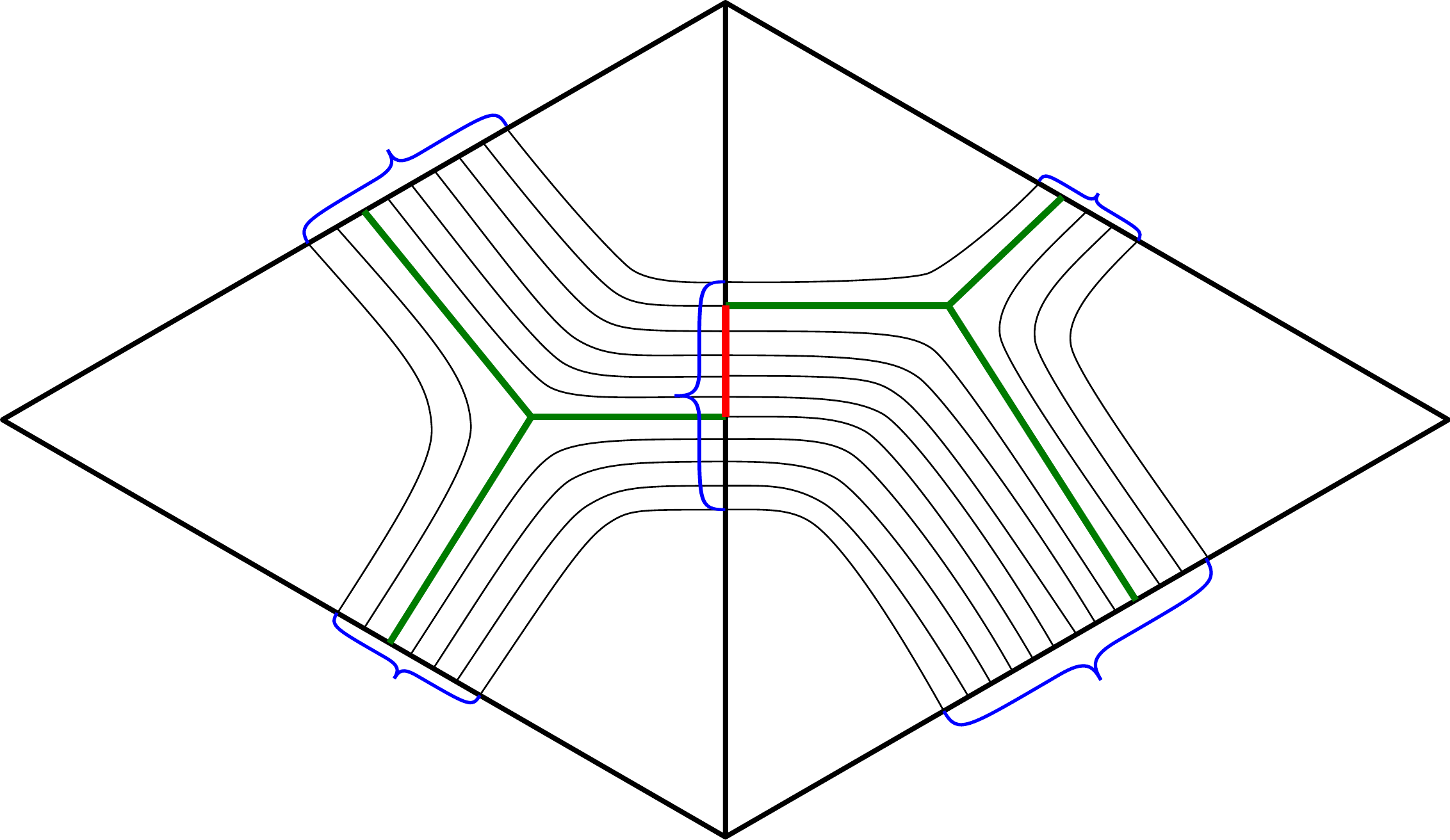}}%
    \put(0.514926,0.38095705){\color[rgb]{0,0,0}\makebox(0,0)[lt]{\lineheight{1.25}\smash{\begin{tabular}[t]{l}\textcolor{red}{$s_e$}\end{tabular}}}}%
    \put(0.42988303,0.18998899){\color[rgb]{0,0,0}\makebox(0,0)[lt]{\lineheight{1.25}\smash{\begin{tabular}[t]{l}\textcolor{blue}{$m_e$}\end{tabular}}}}%
    \put(0.22332075,0.49983632){\color[rgb]{0,0,0}\makebox(0,0)[lt]{\lineheight{1.25}\smash{\begin{tabular}[t]{l}\textcolor{blue}{$m_a$}\end{tabular}}}}%
    \put(0.22332075,0.07217081){\color[rgb]{0,0,0}\makebox(0,0)[lt]{\lineheight{1.25}\smash{\begin{tabular}[t]{l}\textcolor{blue}{$m_b$}\end{tabular}}}}%
    \put(0.72988965,0.06559118){\color[rgb]{0,0,0}\makebox(0,0)[lt]{\lineheight{1.25}\smash{\begin{tabular}[t]{l}\textcolor{blue}{$m_c$}\end{tabular}}}}%
    \put(0.72988965,0.46226519){\color[rgb]{0,0,0}\makebox(0,0)[lt]{\lineheight{1.25}\smash{\begin{tabular}[t]{l}\textcolor{blue}{$m_d$}\end{tabular}}}}%
    \put(0,0){\includegraphics[width=\unitlength,page=2]{FoliInPair.pdf}}%
  \end{picture}%
\endgroup%

 \caption{The \mf\ in a pair of adjacent ideal triangles.}
 \label{fig:mf-in-pair}
\end{figure}

A direct corollary is:

\begin{thm}\label{prop:wp-in-sh}
The \WP volume form under the \Sh\ is equal to the Euclidean volume form on $\mathcal{C}_\Delta$, up to a scaling constant.
\end{thm}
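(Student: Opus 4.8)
The proof will be short: it combines \Prop\ \ref{eq:ms-to-sh} with the preceding proposition and a little linear algebra. By \cite[Corollary 4.2]{PaP93} the \homeo\ $\mathcal{F}_\Delta$ pulls back the Thurston symplectic form on $\mathcal{MF}_0$ to the \WP form on $\Tgn$; passing to the top exterior powers $\omega^{d}/d!$ (with $d=3g-3+n$) one gets $\mathcal{F}_\Delta^{*}\mu_{Th}=\mu_{wp}$, where $\mu_{Th}$ denotes the volume form induced by the Thurston symplectic form on $\mathcal{MF}_0$. Writing $\Phi:=Sh_\Delta\circ\mathcal{F}_\Delta^{-1}$, so that $Sh_\Delta=\Phi\circ\mathcal{F}_\Delta$, pushing $\mu_{wp}$ forward to $\mathcal{C}_\Delta$ along $Sh_\Delta$ is the same as pushing $\mu_{Th}$ forward along $\Phi$. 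Thus it suffices to prove that $\Phi_*\mu_{Th}$ is a constant multiple of the Euclidean volume form on $\mathcal{C}_\Delta$.

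Next I would record the linear-algebraic picture. Both $\mathcal{C}_\Delta$ and $\mathcal{MF}_0$ (the latter embedded via $\mathcal{F}_\Delta(X)\mapsto(m_c(X))_{c\in\Delta}$) sit inside $\mathbb{R}^{6g-6+3n}$ as full-dimensional cones spanning linear subspaces of dimension $6g-6+2n$; call the span of $\mathcal{MF}_0$ by $W$. The Thurston symplectic form is, by its very construction through the edge coordinates, the restriction to $W$ of a constant-coefficient closed $2$-form on the ambient space --- concretely the sum, over the ideal triangles of $\Delta$, of the cyclically ordered wedges of the three edge coordinates of each triangle (see \cite{Thu86,PeH}) --- and it is nondegenerate on $W$ because its $\mathcal{F}_\Delta$-pullback is the symplectic \WP form. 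Hence $\mu_{Th}$ is a translation-invariant, nowhere-vanishing top form on $W$. By \Prop\ \ref{eq:ms-to-sh}, $\Phi$ is the \emph{linear} map $s_e=\tfrac12(m_a+m_c-m_b-m_d)$; being $Sh_\Delta\circ\mathcal{F}_\Delta^{-1}$ it carries the cone $\mathcal{MF}_0$ homeomorphically onto $\mathcal{C}_\Delta$, so it restricts to a linear isomorphism from $W$ onto $\mathcal{C}_\Delta$.

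Finally, the conclusion follows from two elementary facts: a translation-invariant volume form on a finite-dimensional real vector space is a positive constant times the Euclidean volume form; and a linear isomorphism between Euclidean spaces sends the Euclidean volume form to a constant multiple of the Euclidean volume form. Applying these to $\Phi|_W$ gives $\Phi_*\mu_{Th}=c\,\mu^{\mathcal{C}_\Delta}_{\mathrm{Eucl}}$ for some $c>0$, whence $(Sh_\Delta)_*\mu_{wp}=c\,\mu^{\mathcal{C}_\Delta}_{\mathrm{Eucl}}$, as claimed. The only step requiring any care is the assertion that the Thurston form is constant-coefficient in the edge-measure coordinates $(m_c)$, for which one invokes its standard triangle-coordinate description rather than merely its existence; once that is granted no Jacobian needs to be computed, and the exact value of $c$ is immaterial for \Thm\ \ref{thm:main}, since it cancels in the ratio $n_\Delta/b_{g,n}$.
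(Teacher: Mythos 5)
Your overall route is the same as the paper's: \Thm\ \ref{prop:wp-in-sh} is obtained by combining the quoted result of \cite{PaP93} (that $\mathcal{F}_\Delta$ pulls back Thurston's form to the \WP form) with the linear formula of \Prop\ \ref{eq:ms-to-sh}, so no Jacobian computation is needed. However, there is a genuine gap in your linear-algebra step. You assert that the edge-measure image of $\mathcal{MF}_0$ spans a linear subspace $W$ of dimension $6g-6+2n$ and that $\Phi=Sh_\Delta\circ\mathcal{F}_\Delta^{-1}$ restricts to a linear isomorphism $W\to\mathcal{C}_\Delta$. This is false in general: the embedded $\mathcal{MF}_0$ is only a piecewise-linear, non-convex cone, and its linear span is typically all of $\mathbb{R}^{6g-6+3n}$. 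Already for $S_{1,1}$ the weighted simple closed curves of slopes $0,\infty,1$ have edge-measure vectors $(0,1,1)$, $(1,0,1)$, $(1,1,0)$, which span $\mathbb{R}^3$, while $\mathcal{C}_\Delta$ is the $2$-plane $s_1+s_2+s_3=0$; the ambient linear map of \Prop\ \ref{eq:ms-to-sh} is (up to sign conventions) $s_1=m_2-m_3$, $s_2=m_3-m_1$, $s_3=m_1-m_2$, which has the diagonal as kernel and is injective on $\mathcal{MF}_0$ only because of the shape of that cone (the locus where one of the equalities $m_i=m_j+m_k$ holds). Consequently $\Phi^{-1}$ is merely piecewise linear, the Thurston form is not "nondegenerate on $W$" (a skew form on an odd- or too-high-dimensional space cannot be), and your final appeal to "translation-invariant volume form on $W$ plus linear isomorphism" does not go through as written.

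The argument can be repaired, but it needs one more idea. Working cone by cone on the finitely many maximal linear pieces of $\mathcal{MF}_0$, the restriction of the constant-coefficient Thurston form pulls back under the linear map $(\Phi|_{\rm piece})^{-1}$ to a constant-coefficient $2$-form on the corresponding full-dimensional subcone of $\mathcal{C}_\Delta$; this shows the \WP volume density in the shearing coordinates is constant on each such piece, but a priori the constants could differ from piece to piece. One then concludes they agree either by observing that the \WP form is smooth (indeed real-analytic) in the shearing coordinates, so a density that is constant on each of finitely many closed full-dimensional cones covering $\mathcal{C}_\Delta$ must be globally constant, or, more directly, by quoting the explicit constant-coefficient expression of the \WP form in the shearing coordinates from \cite{PaP93,SoB01}, which makes the theorem immediate. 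With that correction your proof is complete and matches the intent of the paper's "direct corollary".
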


\subsection{Proof of Theorem \ref{thm:main} }
\indent

\begin{proof}

It is obvious that the shear norm $\|Sh_\Delta\|$ is proper in $\mathcal{T}_{g,n}$. Applying Theorem \ref{thm:shear-apl} and Proposition \ref{prop:bound} to Theorem \ref{thm:count},
we have
\begin{equation*}
\lim_{L\to + \infty}
\frac{ \#\left\{\ \phi\in \mathrm{Mod}_{g,n}\ \big|\ \|Sh_\Delta(\phi\cdot X)\|\leqslant L\ \right\} }
{L^{6g-6+2n}}
=
\frac{n_\Delta \cdot B(X)}{b_{g,n}},
\end{equation*}
where
\begin{equation*}\label{eq:coeff}
n_\Delta=
\lim_{L\to + \infty}
\frac{ \mu_{wp} \left\{ Y\in\mathcal{T}_{g,n}\ \big|\ \|Sh_\Delta(Y)\|\leqslant L \right\} }
{L^{6g-6+2n}}\ .
\end{equation*}

We have shown in \Thm\ \ref{prop:wp-in-sh} that, up to a scaling constant,
the Weil-Petersson volume form is equal to the Euclidean volume form on the image of the \Sh, which is obviously homogenous.
Thus the coefficient in (\ref{eq:coeff}) is equal to the volume of the unit ball.
This finishes the proof.

\end{proof}

\phantomsection
\addcontentsline{toc}{section}{Refence}

\end{document}